\newcommand{\br}{\mathbb R}
\numberwithin{equation}{section}
\newtheorem{theorem}{Theorem}[section]
\newtheorem{lemma}{Lemma}[section]
\newtheorem{proposition}{Proposition}[section]
\newtheorem{corollary}{Corollary}[section]
\newtheorem{remark}{Remark}[section]
\newcommand{\tow}{\rightharpoonup}
\title{\bf  Convergence of Distributed Optimal Control Problems
 Governed by Elliptic Variational Inequalities}
\author{\
Mahdi Boukrouche\thanks{PRES Lyon University, University of
 Saint-Etienne, Laboratory of Mathematics, LaMUSE EA-3989,
23 rue Paul Michelon, 42023 Saint-Etienne, France.
  E-mail: Mahdi.Boukrouche@univ-st-etienne.fr} \and and \and
Domingo A. Tarzia \thanks{Corresponding author: Departamento de
Matem\'atica-CONICET, FCE, Univ. Austral, Paraguay 1950, S2000FZF
Rosario, Argentina. Tel: +54-341-5223093, Fax:  +54-341-5223001.
E-mail: DTarzia@austral.edu.ar} }
\begin{document}
\maketitle

\date{}
\begin{abstract}
First, let $u_{g}$ be the unique solution of an  elliptic
 variational inequality with source term  $g$.
We establish, in the general case, the error estimate  between
$u_{3}(\mu)=\mu u_{g_{1}}+ (1-\mu)u_{g_{2}}$
and $u_{4}(\mu)=u_{\mu g_{1}+ (1-\mu ) g_{2}}$
for $\mu\in [0 , 1]$. Secondly, we consider a family of distributed
optimal control problems governed by elliptic variational
inequalities over the internal energy $g$ for each positive heat
transfer coefficient $h$ given on a part of the boundary of the
domain. For a given  cost functional and using some monotony
property between $u_{3}(\mu)$ and $u_{4}(\mu)$ given in F. Mignot,
J. Funct. Anal., 22 (1976), 130-185, we prove the strong convergence
of the optimal controls and states associated to this family of
distributed optimal control problems governed by elliptic
variational inequalities to a limit Dirichlet distributed optimal
control problem, governed also by an elliptic variational
inequality, when the parameter $h$ goes to infinity. We obtain this
convergence without using the adjoint state problem (or the Mignot's
conical differentiability) which is a great advantage with respect
to the proof given in C.M. Gariboldi - D.A. Tarzia, Appl. Math.
Optim., 47 (2003), 213-230, for optimal control problems governed by
elliptic variational equalities.

\smallskip
\smallskip
{\it Key words:}  Elliptic variational inequalities, convex
combinations of the solutions, distributed optimal control problems,
 convergence of the optimal controls, obstacle problem, free boundary problems.
\end{abstract}
\smallskip
{\it 2000 AMS Subject Classification} {35R35, 35B37, 35J85, 49J20.}

\smallskip
\begin{center}
{ Short title : Convergence of optimal controls
         for obstacle problems}
\end{center}

\maketitle
\centerline{ Accepted in Comput Optim Appl
DOI 10.1007/s10589-011-9438-7}

\section{Introduction}\label{intro}
Let $V$  a Hilbert space, $V'$  its topological dual,  $K$ be  a
closed, convex and non empty set in $V$, $g$ in $V'$ and a bilinear
form $a : V\times V\to \mathbb{R}$, which is
 symmetric, continuous and coercive form that is there exists a constant
 $m>0$ such that $m\|v\|^{2}\leq a(v , v)$ for all $v$ in $V$. It is well known
\cite{Stamp64, JLL1, Kind80} that for each $g\in V'$
there exists a unique solution $u\in K$, such that
\begin{equation}\label{eq1}
a(u \, ,\, v-u) \geq  \langle g \,,\, v- u\rangle  \qquad \forall
v\in K,
\end{equation}
where $\langle \cdot , \cdot \rangle$ denotes the duality pairing
between $V$ and $V'$. So we can consider  $g \mapsto u= u_{g}$ as a
function from $V'$ to   $K$. Let $u_{i}= u_{g_{i}}$ be the
corresponding solution of (\ref{eq1}) with $g=g_{i}$ for $i=1 , 2$.
We define  for  $\mu\in [0 , 1]$
\begin{equation}\label{d}
u_{3}(\mu)= \mu u_{1}+ (1-\mu)u_{2}, \quad g_{3}(\mu)= \mu g_{1} + (1-\mu)g_{2},
\quad\mbox{ and }\quad u_{4}(\mu)= u_{g_{3}(\mu)}.
\end{equation}
In \cite{MB-DT1}, we  established the necessary and sufficient condition to obtain that  the convex combination
$u_{3}(\mu)$ is the unique solution of the elliptic variational inequality \eqref{eq1} with source term
 $g_{3}(\mu)$, namely
\begin{equation}\label{u3}
u_{4}(\mu) =  u_{3}(\mu) \quad \forall \mu\in [0 , 1]
\mbox{ if and only if } \alpha = \beta =0,
\end{equation}
with
\begin{gather}\label{alp}
\alpha=\alpha(g_{1}) := a(u_{1} ,  u_{2} - u_{1})
-\langle  g_{1} ,  u_{2} - u_{1}\rangle,\\
\label{bet} \beta=\beta(g_{2}) := a(u_{2} ,  u_{1} - u_{2}) -\langle
g_{2} ,  u_{1} - u_{2}\rangle.
\end{gather}

In Section \ref{sec1},  we establish the error estimate between
$u_{3}(\mu)$ and $u_{4}(\mu)$ in the case where  $\alpha$ and
$\beta$ defined by  (\ref{alp}) and (\ref{bet}) are not equal to
zero.  We obtain also some other information concerning
 $u_{3}(\mu)$ and $u_{4}(\mu)$ which will be used in Section \ref{ocpb}. We can not obtain,
for an arbitrary convex $K$, a needed monotony property of
 $u_{3}(\mu)$ and $u_{4}(\mu)$ that $u_{4}(\mu) \leq
u_{3}(\mu)$  $\forall \mu\in [0 , 1]$ \cite{Mingot1} but we can
obtain this inequality for the complementary free boundary problems
given in Section \ref{pobs}.

In Section \ref{pobs},  we consider  a family of free boundary
problems with mixed boundary conditions associated to particular
cases  of the elliptic variational inequality (\ref{eq1}). We study
some dependence properties of the solutions to this family of
elliptic variational inequalities, on the internal energy $g$ (see
more details in the complementary problem (\ref{pr1}) or the
variational inequalities (\ref{iv1}) or (\ref{iv2})) and also on the
heat transfer coefficient $h$ which is  characterized in  the Newton
law or the Robin boundary condition (\ref{bc2}) (see also the
variational inequality (\ref{iv2})). Note that mixed boundary
conditions play an important role in various applications
\cite{HMRS2009, TT}.

In Section \ref{ocpb}, first for a given constant $M>0$ we consider
$g$ as a control variable for the cost functional (\ref{e4.1}), then
we formulate the distributed optimal control problem associated to
the variational inequality (\ref{iv1}). We also formulate the family
of distributed optimal control problems associated to the
variational inequality of (\ref{iv2}), which depend on a positive
parameter $h$. With the above dependence properties obtained in
Section \ref{pobs}, the inequality obtained in Section \ref{sec1}
and by using the monotony property \cite{Mingot1} between
$u_{3}(\mu)$ and $u_{4}(\mu)$, we obtain a new proof of the strict
convexity of the cost functional which is not given in
\cite{Mingot1} and then the existence and the uniqueness of the
optimal control $g_{op}$ holds. We obtain similar results for the
optimal control $g_{op_{h}}$. We remark here that the strict
convexity of the cost functional is automatically true (then the
uniqueness of the optimal control problems holds) when the
equivalence (\ref{u3}) is verified.

Then, we prove that the optimal control $g_{op_{h}}$ and its
corresponding state $u_{g_{op_{h}h}}$ are strongly convergent to
$g_{op}$ and $u_{g_{op}}$ respectively, when $h\to +\infty$, in
adequate functional spaces. This asymptotic behavior can be
considered very important in the optimal control for heat transfer
problems because the Dirichlet boundary condition, given in
(\ref{bc1}) is not a relevant physical condition to impose on the
boundary; the true relevant physical condition is given by the
Newton law or the Robin boundary condition (\ref{bc2})
\cite{CaJa1959}. Therefore, the goal of this paper is to approximate
a Dirichlet optimal control problem, governed by an elliptic
variational inequality, by a Neumann optimal control problems,
governed by elliptic variational inequalities, for a large positive
coefficient $h$. Moreover, from a numerical analysis point of view
it maybe preferable to consider approximating Neumann problems in
all space $V$ (see the variational inequality (\ref{iv2})), with
parameter $h$, rather than a Dirichlet problem in a restriction of
the space $V$ (see the variational inequality (\ref{iv1})).

We note here that we do not need to consider the adjoint state for
problems (\ref{iv1}) and (\ref{iv2}) as in \cite{GT, MT} in order to
prove the convergence when $h\to +\infty$. This is a very important
advantage of our proof with respect to the previous one given for
variational equalities in \cite{GT}. This fact was possible because
we do not need to use the cornerstone Mignot's conical
differentiability of the cost functional \cite{Mingot1}.

Different problems with distributed optimal control governed by
partial differential equations can be found in the following books
\cite{Barbu84a, JLL, NPS2006, T2010}. Moreover, we describe briefly
some works on optimal control governed by elliptic variational
inequalities, see for example:
 \cite{A2006,MP1984} on optimality conditions for the penalized problem,
 \cite{B1997b} on augmented Lagrangian algorithms,
 \cite{Mber2, BM2000, IK2000, K2008} on Lagrange multipliers,
 \cite{YC2004} on quasilinear elliptic variational inequalities,
 \cite{H2008} on estimation of a parameter involved in a variational inequality model,
 \cite{Acapatina2000} on  optimal control problems of variational inequalities for Signorini problem,
 \cite{Pa1977} on  optimal control for variational inequalities governed by a pseudomonotone operator,
 \cite{Jhas1986} when optimal control problem for a variational
inequality is approximated by a family of finite-dimensional
problems, \cite{H2001} on the identification of a distributed
parameter, and \cite{MRT2006} on regularization techniques with
state constraints. In conclusion, many practical applications
ranging from physical and engineering sciences to mathematical
finance are modeled properly by elliptic and parabolic variational
inequalities (see \cite{H2008, H2009, IK2008} and their references
within them).

\section{Some general results}\label{sec1}

In \cite{MB-DT1} we proved the  equivalence (\ref{u3}).
In order to study optimal control problems in Section  \ref{ocpb}
 it is useful for us, to obtain the error estimate between
 $u_{3}(\mu)$ and $u_{4}(\mu)$ when the equivalence (\ref{u3}) is not
satisfied.

\begin{theorem}\label{th1}
Let $u_{1}$ and $u_{2}$ be the two  solutions of the variational inequality (\ref{eq1})
with respectively  as source term  $g_{1}$ and $g_{2}$, then
 we have the following estimate
\begin{eqnarray*}\label{eq3.1}
m\|u_{4}(\mu) -u_{3}(\mu)\|^{2}_{V} + \mu I_{14}(\mu) + (1-\mu) I_{24}(\mu)
\leq \mu(1-\mu)(\alpha + \beta), \quad \forall \mu\in [0  , 1]
\end{eqnarray*}
where $\alpha$ and $\beta$ are  defined by  {\rm (\ref{alp})} and
{\rm (\ref{bet})} respectively and
$$I_{14}(\mu) =  a(u_{1}\, , \, u_{4}(\mu) - u_{1})
-\langle g_{1} , u_{4}(\mu) - u_{1}\rangle \geq 0$$
$$I_{24}(\mu) = a(u_{2}\, , \, u_{4}(\mu) - u_{2} )
-\langle g_{2} , u_{4}(\mu) - u_{2}\rangle\geq 0.$$
\end{theorem}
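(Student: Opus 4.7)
The plan is to start from the coercivity inequality $m\|u_4(\mu)-u_3(\mu)\|_V^2 \le a(u_4(\mu)-u_3(\mu),\,u_4(\mu)-u_3(\mu))$ and, via the variational inequality for $u_4(\mu)$, convert the right-hand side into an expression that splits cleanly into the error quantities $I_{14}(\mu)$, $I_{24}(\mu)$, $\alpha$, $\beta$. The non-negativity of $I_{14}(\mu)$ and $I_{24}(\mu)$ is immediate: $u_4(\mu)\in K$ is an admissible test function in the variational inequality \eqref{eq1} solved by $u_1$ and $u_2$.

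Next, I would split the bilinear form by symmetry as $a(u_4-u_3,u_4-u_3)=a(u_4,u_4-u_3)-a(u_3,u_4-u_3)$, and then apply \eqref{eq1} to $u_4(\mu)$ with the admissible test function $u_3(\mu)\in K$ (which lies in $K$ because $K$ is convex and contains $u_1,u_2$). This gives $a(u_4,u_4-u_3)\le\langle g_3(\mu),u_4-u_3\rangle$, so that
\begin{equation*}
m\|u_4(\mu)-u_3(\mu)\|_V^2 \;\le\; \langle g_3(\mu),u_4(\mu)-u_3(\mu)\rangle - a(u_3(\mu),u_4(\mu)-u_3(\mu)).
\end{equation*}

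The real work is the bookkeeping that follows. Using linearity of both $a(\cdot,\cdot)$ in its first slot and the duality pairing, together with the definitions $u_3(\mu)=\mu u_1+(1-\mu)u_2$ and $g_3(\mu)=\mu g_1+(1-\mu)g_2$, the right-hand side factors as
\begin{equation*}
\mu\bigl[\langle g_1,u_4-u_3\rangle-a(u_1,u_4-u_3)\bigr] + (1-\mu)\bigl[\langle g_2,u_4-u_3\rangle-a(u_2,u_4-u_3)\bigr].
\end{equation*}
The decisive algebraic identity is $u_4(\mu)-u_3(\mu) = (u_4(\mu)-u_1)-(1-\mu)(u_2-u_1) = (u_4(\mu)-u_2)+\mu(u_2-u_1)$. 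Substituting the first decomposition into the $\mu$-bracket yields exactly $-I_{14}(\mu)+(1-\mu)\alpha$, and substituting the second into the $(1-\mu)$-bracket yields $-I_{24}(\mu)+\mu\beta$, by the very definitions \eqref{alp} and \eqref{bet} of $\alpha$ and $\beta$.

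Putting this together produces
\begin{equation*}
m\|u_4(\mu)-u_3(\mu)\|_V^2 \;\le\; -\mu I_{14}(\mu)-(1-\mu)I_{24}(\mu)+\mu(1-\mu)(\alpha+\beta),
\end{equation*}
which upon rearrangement is exactly the claimed inequality. The only delicate point is the combinatorial decomposition of $u_4-u_3$ in the two different ways that reveal $I_{14}$, $\alpha$ on one side and $I_{24}$, $\beta$ on the other; everything else is symmetry of $a$, linearity, and one application of the variational inequality for $u_4(\mu)$.
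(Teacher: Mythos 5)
Your proof is correct and follows essentially the same route as the paper's: coercivity of $a$, the variational inequality for $u_{4}(\mu)$ tested with the admissible $u_{3}(\mu)\in K$, and then linear bookkeeping that yields $-\mu I_{14}(\mu)-(1-\mu)I_{24}(\mu)+\mu(1-\mu)(\alpha+\beta)$ on the right-hand side. The only cosmetic differences are that the paper organizes the expansion via $u_{3}(\mu)=\mu(u_{1}-u_{2})+u_{2}$ instead of your two decompositions of $u_{4}(\mu)-u_{3}(\mu)$, and that your initial splitting of $a(u_{4}-u_{3},u_{4}-u_{3})$ needs only bilinearity, not symmetry.
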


\begin{proof}
As $u_{4}(\mu)$ is the unique solution of the variational inequality
\begin{eqnarray*}
  a(u_{4}(\mu) \, , \, v-u_{4}(\mu))
- \langle g_{3}(\mu) , v- u_{4}(\mu)\rangle \geq 0, \quad \forall v\in K
\end{eqnarray*}
and $u_{3}(\mu) \in K$ so taking $v= u_{3}(\mu)$ in this
variational inequality, we have
\begin{eqnarray*}
m\|u_{4}(\mu)- u_{3}(\mu)\|_{V}^{2}\leq
a(u_{3}(\mu) \, , \, u_{3}(\mu)-u_{4}(\mu))-
\langle g_{3}(\mu) \, ,\,  u_{3}(\mu)- u_{4}(\mu)\rangle.
\end{eqnarray*}
Using  that $u_{3}(\mu)=\mu(u_{1} -u_{2})+  u_{2}$ and $g_{3}(\mu)=\mu(g_{1} -g_{2})+  g_{2}$ we obtain
\begin{eqnarray*}\label{eq3.2}
m\|u_{4}(\mu)- u_{3}(\mu)\|_{V}^{2}
&\leq&
\left[a(u_{2} \, , \,  u_{2}-u_{4}(\mu))
           - \langle g_{2} \, , \, u_{2}-u_{4}(\mu)\rangle\right]
\nonumber\\
&&+\mu\left[a( u_{2} \, , \, u_{1} -u_{2})
           - \langle g_{2} \, ,\, u_{1} -u_{2}\rangle\right]
\nonumber\\
&&+\mu^{2} \left[a(u_{1} -u_{2} \, , \, u_{1} -u_{2})
                 -\langle g_{1} -g_{2} \, , \, u_{1} -u_{2}\rangle\right]
\nonumber\\
&&+\mu \left[ a(u_{1} -u_{2} \, , \,  u_{2}-u_{4}(\mu))
             - \langle g_{1} -g_{2} \, , \,  u_{2}-u_{4}(\mu)\rangle \right]
\nonumber\\
&\leq&  -I_{24}(\mu) + \mu \beta - \mu^{2}\beta -\mu^{2}\alpha + \mu I_{24}(\mu)
\nonumber\\
&&
  + \mu \left[a(u_{1} \, , \,  u_{2}-u_{4}(\mu))
             - \langle g_{1} \, , \,  u_{2}-u_{4}(\mu)\rangle \right],
\end{eqnarray*}
so
\begin{eqnarray*}
m\|u_{4}(\mu)- u_{3}(\mu)\|_{V}^{2}
&\leq& \mu (1-\mu)(\alpha +\beta) -\left[\mu I_{14}(\mu)+(1-\mu)I_{24}(\mu) \right],
\end{eqnarray*}
which is the required result.
\end{proof}

The result of Theorem \ref{th1} will be used in Section \ref{ocpb}
(see Lemma \ref{l3}). Moreover, from Theorem \ref{th1} we deduce the
result obtained in \cite{MB-DT1} and more information concerning
$u_{3}(\mu)$ and $u_{4}(\mu)$ in the following corollary.
\begin{corollary}
\begin{eqnarray*}
\alpha(g_{1})=\beta(g_{2})= 0 \Longrightarrow \left\{
\begin{array}{ll}
(i)  & u_{3}(\mu) =u_{4}(\mu)  \qquad \forall \mu\in [0 , 1]\\ \\
(ii) & I_{14}(\mu)= I_{24}(\mu)=0 \qquad \forall \mu\in [0 , 1].
 \end{array}
\right.
\end{eqnarray*}
\end{corollary}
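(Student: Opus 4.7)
The plan is to apply Theorem \ref{th1} directly with the hypothesis $\alpha(g_{1})=\beta(g_{2})=0$ and exploit the non-negativity of every term that appears on the left-hand side of the estimate.

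Under the hypothesis, the right-hand side of the inequality in Theorem \ref{th1} collapses to zero, so
$$m\|u_{4}(\mu)-u_{3}(\mu)\|_{V}^{2} + \mu\, I_{14}(\mu) + (1-\mu)\,I_{24}(\mu) \leq 0 \qquad \forall\mu\in[0,1].$$
Since $m>0$ and each of $\|u_{4}(\mu)-u_{3}(\mu)\|_{V}^{2}$, $I_{14}(\mu)$, $I_{24}(\mu)$ is non-negative (the last two by the explicit non-negativity assertions in Theorem \ref{th1}), and since $\mu$ and $1-\mu$ are themselves non-negative weights, each of the three summands must vanish.

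For $\mu\in(0,1)$, the strict positivity of the weights $\mu$ and $1-\mu$ yields both conclusions (i) and (ii) at once. For the boundary values, the statements are handled by direct inspection: at $\mu=0$ one has $g_{3}(0)=g_{2}$ and therefore $u_{4}(0)=u_{2}=u_{3}(0)$, while $I_{14}(0)=\alpha(g_{1})=0$ and $I_{24}(0)=0$ trivially; the case $\mu=1$ is symmetric.

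I do not anticipate a genuine obstacle: the corollary is a straightforward consequence of Theorem \ref{th1}, and the only point requiring a brief separate comment is the treatment of the endpoints $\mu\in\{0,1\}$, where the weight in front of $I_{14}$ or $I_{24}$ degenerates and one must verify the claims by direct substitution rather than by dividing through.
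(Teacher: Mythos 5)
Your proposal is correct and follows exactly the route the paper intends: the corollary is stated as an immediate consequence of Theorem \ref{th1}, obtained by setting $\alpha=\beta=0$ and using the non-negativity of each term on the left-hand side. Your separate verification of the endpoints $\mu\in\{0,1\}$ (where the weights degenerate, and where one checks directly that $I_{14}(0)=\alpha$ and $I_{24}(1)=\beta$) is a small but legitimate refinement that the paper leaves implicit.
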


\begin{remark}\label{r1}
 We can not obtain a monotony property between $u_{3}(\mu)$ and $u_{4}(\mu)$
for a general variational inequality {\rm(\ref{eq1})}, precisely for any convex set $K$.
 But we can obtain it  when we consider the particular obstacle
 problems (see Section \ref{pobs}).
\end{remark}

\section{Dependence properties of solution of obstacle problem}\label{pobs}

Let $\Omega$ an open bounded set in $\br^{n}$ with its boundary
$\partial\Omega=\Gamma_{1}\cup\Gamma_{2}$. We suppose that
 $\Gamma_{1}\cap\Gamma_{2}=\emptyset$,
and  $mes(\Gamma_{1})>0$. We consider the following complementarity
problem:
\begin{equation}\label{pr1}
u\geq 0, \quad  u (-\Delta u - g) = 0, \quad -\Delta u - g\geq
0\quad a.e. \quad in\quad \Omega
 \end{equation}
\begin{equation}\label{bc1}
u= b \quad on \quad \Gamma_{1},\qquad \qquad
-{\partial u\over \partial n} = q \quad on \quad \Gamma_{2}
\end{equation}
and for a parameter $h>0$, we consider the complementarity problem
(\ref{pr1}) with the mixed boundary conditions :
\begin{equation}\label{bc2}
-{\partial u\over \partial n} = h(u-b) \quad on \quad \Gamma_{1}\qquad \qquad
-{\partial u\over \partial n} = q \quad on \quad \Gamma_{2}
\end{equation}
where $h$ is the heat transfer coefficient on $\Gamma_{1}$, $g$ is the internal energy, $b$ is the temperature on $\Gamma_{1}$, $q$ is the heat flux on $\Gamma_{2}$.

It is well known that the regularity of the mixed problem is
problematic in the neighborhood of  some part of the boundary, see
for example the book \cite{gri85}. A regularity for elliptic
problems with mixed boundary conditions is given in
\cite{bacuta2003, LCB2008}. Moreover, sufficient hypothesis on the
data in order to have the $H^{2}$ regularity for elliptic
variational inequalities are (\cite{R1987}, page 139):
\begin{equation}\label{hypo}
\partial \Omega\in C^{1, 1}, \quad g\in H=L^{2}(\Omega), \quad q\in
H^{3/2}(\Gamma_{2})
 \end{equation}
which are assumed from now on.

We define the spaces $V=H^{1}(\Omega)$, $V_{0}=\{ v\in V :\,
v_{|_{\Gamma_{1}}}=0\}$ and the convex sets given by
$$K=\{v\in V : \quad v|_{\Gamma_{1}}= b, \quad v\geq 0 \quad in \quad \Omega\},$$
$$K_{+}=\{v\in V : \quad  v\geq 0 \quad in \quad \Omega\}.$$

It is classical that, for a given positive $b\in H^{1\over 2}(\Gamma_{1})$,
 $q\in L^{2}(\Gamma_{2})$, and $g\in H$,
the two free boundary problems (\ref{pr1})-(\ref{bc1}) and
(\ref{pr1}), (\ref{bc2}) lead respectively to the following elliptic
variational problems: Find $u\in K$ such that
\begin{equation}\label{iv1}
 a(u , v-u) \geq ( g , v-u) - \int_{\Gamma_{2}}q(v-u)ds, \qquad \forall v\in K
\end{equation}
and find $u\in K_{+}$  such that
\begin{equation}\label{iv2}
  a_{h}(u \, ,\,  v-u) \geq ( g \, ,\,  v-u) - \int_{\Gamma_{2}}q(v-u)ds + h\int_{\Gamma_{1}}b(v-u)ds\quad \forall v\in K_{+}
\end{equation}
respectively, where
$$a(u  , v)= \int_{\Omega}\nabla u \nabla v dx,
\qquad (g , v) = \int_{\Omega} g  v dx,$$
$$a_{h}(u , v)= a(u , v)+ h\int_{\Gamma_{1}}u  v ds.$$

It is evident that {\rm\cite{Kind80}}
$$\exists  \lambda >0 \quad\mbox{ such that  }
\lambda \|v\|_{V}^{2}\leq a( v \, , \, v)\quad \forall v\in V_{0}.$$
Moreover {\rm\cite{TT, T}}
 $$ \exists\lambda_{1}>0 \quad\mbox{ such that  } \lambda_{h}\|v\|_{V}^{2}    \leq a_{h}(v , v)  \quad \forall v\in V,
\mbox{ with }  \lambda_{h}= \lambda_{1}\min\{1  \,, \, h\}$$
that is  $a_{h}$ is a bilinear continuous, symmetric and coercive
form on $V$, as $a$.

\begin{remark}
Note that we can easily obtain the same results of this paper for more general problem
than  {\rm(\ref{pr1})-(\ref{bc1})} and {\rm(\ref{pr1}), (\ref{bc2})} governed
by elliptic  variational inequalities under the assumption that the form $a$ must be
 bilinear, continuous  and coercive.
\end{remark}

\begin{remark}\label{r2.3}
The variational inequalities {\rm(\ref{iv1})} and {\rm (\ref{iv2})} are the  particular cases of {\rm (\ref{eq1})}  for the particular convex sets $K$ and $K_{+}$ and
\begin{equation}\label{L}
 <g \, , \, v > =  ( g , v) - \int_{\Gamma_{2}}q v ds,
\end{equation}
\begin{equation}\label{Lh}
 < g \, , \, v> =  ( g , v) - \int_{\Gamma_{2}}q v ds + h\int_{\Gamma_{1}}b v ds
\end{equation}
respectively.
Moreover for $g\geq 0$ in $\Omega$, $q\leq 0$ on $\Gamma_{2}$ and $b\geq 0$ on
$\Gamma_{1}$, then by the weak maximum principle, the unique solution of {\rm(\ref{iv1})}
is in $K$ and the unique solution of {\rm(\ref{iv2})} is in $K_{+}$ for each $h > 0$.
\end{remark}

For all $h>0$ and all $g\in H$, we associate $u=u_{g_{h}}$ the
unique solution of (\ref{iv2}) and $u=u_{g}$ the
unique solution of (\ref{iv1}).

\begin{lemma}\label{l2.3}
a) Let  $u_{g_{n}}$, $u_{g}$ two solutions of {\rm(\ref{iv1})} with
$g_{n}$ and $g$ in $H$ then we have
\begin{equation}\label{eq3.5}
  g_{n}\tow g \quad in \quad H \quad (weak)\quad as \quad n\to +\infty \quad then \quad  u_{g_{n}}\to u_{g}\quad in \quad V \quad
  (strong).
\end{equation}

Moreover, we have
\begin{equation}\label{eq3.6}
 g_{1}\geq g_{2} \quad in \quad \Omega \quad then \quad u_{g_{1}}\geq u_{g_{2}}\quad in \quad
 \Omega,
\end{equation}
\begin{equation}\label{eq3.7}
 u_{min(g_{1} , g_{2})} \leq u_{4}(\mu) \leq u_{max(g_{1} , g_{2})}, \qquad \forall \mu\in [0 ,
 1].
\end{equation}

b) Let  $u_{g_{n}h}$, $u_{gh}$ two solutions of {\rm(\ref{iv2})}
with $g_{n}$ and $g$ in $H$ and $h>0$ then we have
\begin{equation}\label{eq3.5h}
  g_{n}\tow g \quad in \quad H \quad (weak)\quad as \quad n\to +\infty \quad then \quad  u_{g_{n}h}\to u_{gh}\quad in \quad V \quad
  (strong).
\end{equation}

\end{lemma}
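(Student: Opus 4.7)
The plan is to handle the convergence statements (\ref{eq3.5}) and (\ref{eq3.5h}) by the standard weak-to-strong variational-inequality scheme, and to deduce the comparison statements (\ref{eq3.6})--(\ref{eq3.7}) by a lattice argument.

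For (\ref{eq3.5}) I would proceed in four steps. First, obtain an a priori bound on $\{u_{g_n}\}$ in $V$ by testing (\ref{iv1}) against a fixed admissible $v_0\in K$ (a nonnegative $H^1$-lifting of $b$), applying coercivity of $a$ on $V_0$ to $u_{g_n}-v_0$, and using that $\{g_n\}$ is bounded in $H$. Second, extract a weakly convergent subsequence $u_{g_n}\tow u^\ast\in K$ (weak closedness of $K$). Third, pass to the limit in the variational inequality with a fixed test $v\in K$, using lower semicontinuity of $a(\cdot,\cdot)$ on the quadratic term, bilinearity on the mixed term, and the compact embedding $V\hookrightarrow H$ to handle the weak-strong pairing $(g_n,u_{g_n}-v)$; this identifies $u^\ast=u_g$, and uniqueness gives convergence of the full sequence. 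Fourth, upgrade to strong convergence by the classical trick of testing the inequality for $u_{g_n}$ at $u_g$ and the inequality for $u_g$ at $u_{g_n}$, adding, and using $u_{g_n}-u_g\in V_0$ to obtain
\begin{equation*}
\lambda\|u_{g_n}-u_g\|_V^{2}\le a(u_{g_n}-u_g,u_{g_n}-u_g)\le(g_n-g,u_{g_n}-u_g).
\end{equation*}
The right-hand side splits as $(g_n,u_{g_n}-u_g)-(g,u_{g_n}-u_g)$; the first term vanishes because $\{g_n\}$ is bounded in $H$ and $u_{g_n}\to u_g$ strongly in $H$ by compactness, and the second by weak convergence $u_{g_n}\tow u_g$ in $H$.

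For (\ref{eq3.6}) I would use the lattice trick: both $\max(u_1,u_2)$ and $\min(u_1,u_2)$ belong to $K$ (same trace $b$ on $\Gamma_1$, both nonnegative). Testing the variational inequality for $u_1$ at $u_1+(u_2-u_1)^+$ and for $u_2$ at $u_2-(u_2-u_1)^+$ and subtracting yields
\begin{equation*}
-a\bigl((u_2-u_1)^+,(u_2-u_1)^+\bigr)\ge\bigl(g_1-g_2,(u_2-u_1)^+\bigr)\ge 0,
\end{equation*}
which by coercivity forces $(u_2-u_1)^+\equiv 0$, i.e.\ $u_1\ge u_2$. The bracketing (\ref{eq3.7}) is then immediate: $\min(g_1,g_2)\le g_3(\mu)\le\max(g_1,g_2)$ combined with the just-proved monotonicity gives the claim, since $u_4(\mu)=u_{g_3(\mu)}$.

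Part (b) is obtained by reusing the scheme of (\ref{eq3.5}) verbatim, with $a_h$ replacing $a$. The extra boundary term $h\int_{\Gamma_1}b(v-u)ds$ in (\ref{iv2}) depends only on the fixed data $b$ and $h$, so it drops out of the subtraction step, and the full-space coercivity $\lambda_h\|v\|_V^{2}\le a_h(v,v)$ on all of $V$ replaces the $V_0$-coercivity of $a$; the compact embedding $V\hookrightarrow H$ plays the same role as before. The hard part, common to both convergence statements, is precisely this upgrade from weak convergence of the sources in $H$ to strong convergence of the states in $V$, and it rests entirely on the compactness $H^1(\Omega)\hookrightarrow L^2(\Omega)$.
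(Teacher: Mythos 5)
Your proposal is correct and follows essentially the same route as the paper's proof: the same a priori bound via a fixed lifting of $b$, weak extraction and limit passage identified by uniqueness, the same cross-testing trick for strong convergence, and the same truncation/lattice argument for the comparison principle (your $(u_2-u_1)^+$ is the paper's $(u_1-u_2)^-$), with part (b) handled identically using $a_h$ and its coercivity on all of $V$. You are in fact slightly more explicit than the paper about why $(g_n-g,u_{g_n}-u_g)\to 0$ (compactness of $V\hookrightarrow H$ plus weak convergence of $g_n$), which the paper leaves implicit.
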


\begin{proof}
a) Let $g_{n}\tow g$ in $H$ as $n\to +\infty$, $u_{g_{n}}$ and
$u_{g}$ in $K$ such that
\begin{equation}\label{gh}
 a(u_{g_{n}} , v-u_{g_{n}}) \geq ( g_{n} , v-u_{g_{n}}) - \int_{\Gamma_{2}}q(v-u_{g_{n}})ds \qquad \forall v\in K.
\end{equation}
Set $z_{n}=u_{g_{n}}-B$ where $B\in K$ such that
$B|_{\Gamma_{1}}=b$, and taking $v=B$ in (\ref{gh})  we obtain the following inequalities
\begin{equation}\label{eq3.8}
\lambda\|z_{n}\|_{V}^{2} \leq a(z_{n},z_{n}) \leq -a(z_{n} , B) +
(g_{n},z_{n}) - \int_{\Gamma_{2}}q z_{n}ds.
\end{equation}

As  $g_{n}\tow g$ in $H$ then $\|g_{n}\|_{H}$ is bounded, then from
(\ref{eq3.8}) there exists a positive constant $C$  which do not
depend on $n$ such that $\|u_{g_{n}}\|_{V} \leq C$. Thus
\begin{equation}\label{eqW}
 \exists \eta \in V  \mbox{  such that }
 u_{g_{n}}\tow \eta    \mbox{  weakly in } V  \quad  \mbox{(strongly in } H),
\end{equation}
taking $n\to +\infty$ in (\ref{gh}), we get
\begin{equation}\label{qh}
 a(\eta , v-\eta ) \geq ( g , v-\eta ) - \int_{\Gamma_{2}}q(v-\eta )ds, \qquad \forall v\in K.
\end{equation}
By the uniqueness of the solution of (\ref{iv1}) we obtain that
$\eta= u_{g}$. Taking now $v= u_{g}$ in (\ref{gh}), and taking
$v=u_{g_{n}}$ in (\ref{iv1}) with $u= u_{g}$, then by addition we get
\begin{equation*}
a(u_{g_{n}} - u_{g} , u_{g_{n}}-u_{g} )\leq (g_{n} -g , u_{g_{n}}- u_{g}),
\end{equation*}
that is (\ref{eq3.5}).

\bigskip
Taking  in (\ref{iv1}) $v=u_{1}+(u_{1}-u_{2})^{-}$ (which is in $K$)  where $u=u_{1}$  and
$g= g_{1}$. Then taking in (\ref{iv1}) $v=u_{2} - (u_{1}-u_{2})^{-}$ (which also is in $K$)   where $u=u_{2}$
and $g= g_{2}$. By addition we get
$$a((u_{1}-u_{2})^{-} , (u_{1}-u_{2})^{-} ) \leq (g_{2}-g_{1} , (u_{1}-u_{2})^{-} )$$
so if $g_{2}-g_{1}\leq 0$ in $\Omega$ then $\|(u_{1}-u_{2})^{-}\|_{V}=0$, and as
$(u_{1}-u_{2})^{-}=0$ on $\Gamma_{1}$ we have  $u_{1}-u_{2}\geq 0$ in $\Omega$.
This gives (\ref{eq3.6}).
Finally (\ref{eq3.7}) follows from (\ref{eq3.6}) because
 $$min\{g_{1} , g_{2}\} \leq \mu g_{1} +(1-\mu) g_{2} \leq max\{g_{1} , g_{2}\}, \quad \forall \mu \in [0 , 1].$$

b) It is similar to a) for all $h>0$.
\end{proof}

\smallskip

Let now $g_{1}$,  $g_{2}$ in $H$, and  $u_{g_{1}h}$, $u_{g_{2}h}$
two solutions of the variational inequality \rm{(\ref{iv2})} with
$g=g_{1}$ and $g=g_{2}$ respectively, and the same $q$ and $h$. We
define also
$$u_{3h}(\mu)= \mu u_{g_{1}h} + (1-\mu)u_{g_{2}h}\quad \mbox{  and   }\quad  u_{4h}(\mu)= u_{(\mu g_{1} + (1-\mu)g_{2})h}.$$
So we obtain as in (\ref{eq3.7}) that
\begin{equation}\label{eq3.7h}
 u_{min(g_{1} , g_{2})h} \leq u_{4h}(\mu)
 \leq
 u_{max(g_{1} , g_{2})h}, \qquad \forall \mu\in [0 , 1].
\end{equation}

\begin{remark}
Taking $v=u^{+}$ in {(\ref{iv2})} we deduce that
$$a_{h}(u^{-} \, , \, u^{-} ) \leq -(g\, , \, u^{-} )
+ \int_{\Gamma{2}} q  u^{-} ds - h \int_{\Gamma{1}} b  u^{-} ds$$ so
for $h >0$ sufficiently large we can have  $u_{gh}\geq 0$ in
$\Omega$ with $g\leq 0$ in $\Omega$, for given $q\geq 0$ on
$\Gamma_{2}$ and $b\geq 0$ on $\Gamma_{1}$.
\end{remark}

 \begin{lemma}\label{l2.3h}
Let $g_{1}, g_{2}$ in $H$ and  $u_{g_{1}h}$, $u_{g_{2}h}$ two
solutions of the variational inequality {(\ref{iv2})} with  the same
$q$ and $h$. Suppose that $b$ is a positive constant and $q\geq 0$,
then we have
\begin{equation}\label{eq3.71}
g\leq 0 \mbox{ in } \Omega  \Longrightarrow u_{g_{h}}\leq b   \mbox{ in  } \Omega,  \mbox{ and } u_{g_{h}}\leq b   \mbox{ on   } \Gamma_{1},
\end{equation}
\begin{equation}\label{eq3.72}
 g_{2}\leq g_{1}\leq 0  \mbox{ in } \Omega, \quad and
\quad h_{2}\leq h_{1}
  \Longrightarrow  u_{g_{2}h_{2}}\leq u_{g_{1} h_{1}}   \mbox{ in  } \Omega,
\end{equation}
\begin{equation}\label{eq3.73}
  g \leq 0 \mbox{ in } \Omega
  \Longrightarrow u_{g_{h}}\leq  u_{g}   \mbox{ in  } \Omega, \quad \forall h>0.
\end{equation}
Moreover $\forall g\in H$, $\forall q\in L^{2}(\Gamma_{2})$ and $\forall b\in H^{1\over 2}(\Gamma_{1})$, we have
\begin{equation}\label{eq3.74}
 h_{2}\leq h_{1}
  \Longrightarrow \|u_{g_{h_{2}}}-  u_{g_{h_{1}}}\|_{V}\leq {\|\gamma_{0}||\over \lambda_{1}\min(1 , h_{2})}\|b-u_{g_{h_{1}}}\|_{L^{2}(\Gamma_{1})} (h_{1}-h_{2})
\end{equation}
where $\gamma_{0}$ is the  trace embedding from $V$ to
$L^{2}(\Gamma_{1})$ and $\|\gamma_{0}\|$ is its norm.

\end{lemma}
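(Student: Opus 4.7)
The plan is to dispatch the four implications in sequence, with each building on the previous.

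For (\ref{eq3.71}), I would test (\ref{iv2}) with the truncation $v = u_{g_h} - (u_{g_h} - b)^+$, which lies in $K_+$ because $b$ is a positive constant so the subtraction stays nonnegative. Since $b$ is constant, $a_h(b, w) = h\int_{\Gamma_1} b w\, ds$ for every $w \in V$, which exactly cancels the boundary forcing on the right-hand side. What remains is
\begin{equation*}
a_h\bigl((u_{g_h} - b)^+,\, (u_{g_h} - b)^+\bigr) \leq (g, (u_{g_h} - b)^+) - \int_{\Gamma_2} q\, (u_{g_h} - b)^+\, ds \leq 0,
\end{equation*}
since $g \leq 0$ and $q \geq 0$; coercivity of $a_h$ then forces $(u_{g_h} - b)^+ \equiv 0$.

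For (\ref{eq3.72}), let $w = (u_{g_2 h_2} - u_{g_1 h_1})^+$. I would test the variational inequality for $u_{g_1 h_1}$ with $v = u_{g_1 h_1} + w \in K_+$ and the variational inequality for $u_{g_2 h_2}$ with $v = u_{g_2 h_2} - w \in K_+$. Adding and carefully expanding $a_{h_1}(u_{g_1 h_1}, w) - a_{h_2}(u_{g_2 h_2}, w)$ as $-a_{h_2}(w,w)$ plus the correction $(h_1 - h_2)\int_{\Gamma_1} u_{g_1 h_1} w\, ds$, I obtain
\begin{equation*}
a_{h_2}(w, w) \leq (g_2 - g_1, w) + (h_2 - h_1)\int_{\Gamma_1}(b - u_{g_1 h_1})\, w\, ds.
\end{equation*}
By hypothesis $g_2 \leq g_1$ and $h_2 \leq h_1$, and (\ref{eq3.71}) applied to $(g_1, h_1)$ yields $b - u_{g_1 h_1} \geq 0$ on $\Gamma_1$, so both terms on the right are nonpositive, forcing $w = 0$. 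For (\ref{eq3.73}), (\ref{eq3.71}) gives $u_{g_h} \leq b = u_g$ on $\Gamma_1$, so the trace of $(u_{g_h} - u_g)^+$ on $\Gamma_1$ vanishes. Therefore $v = u_g + (u_{g_h} - u_g)^+ \in K$ is admissible in (\ref{iv1}), and $v = u_{g_h} - (u_{g_h} - u_g)^+ = \min(u_{g_h}, u_g) \in K_+$ is admissible in (\ref{iv2}). Adding the two resulting inequalities, the $g$- and $q$-terms cancel and the boundary integrals over $\Gamma_1$ vanish (again because the test increment has zero trace there), leaving $a\bigl((u_{g_h} - u_g)^+,(u_{g_h} - u_g)^+\bigr) \leq 0$, so $u_{g_h} \leq u_g$.

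For (\ref{eq3.74}), set $\delta = u_{g_{h_2}} - u_{g_{h_1}}$. I would test (\ref{iv2}) at parameter $h_1$ with $v = u_{g_{h_2}}$ and at $h_2$ with $v = u_{g_{h_1}}$, then add. Using the identity
\begin{equation*}
a_{h_1}(u_{g_{h_1}}, \delta) - a_{h_2}(u_{g_{h_2}}, \delta) = -a_{h_2}(\delta, \delta) + (h_1 - h_2)\int_{\Gamma_1} u_{g_{h_1}}\, \delta\, ds,
\end{equation*}
together with the boundary term $(h_1 - h_2)\int_{\Gamma_1} b\, \delta\, ds$ produced on the right, one gets
\begin{equation*}
a_{h_2}(\delta, \delta) \leq (h_1 - h_2)\int_{\Gamma_1}(u_{g_{h_1}} - b)\, \delta\, ds.
\end{equation*}
Cauchy--Schwarz on $L^2(\Gamma_1)$, the coercivity $a_{h_2}(\delta,\delta) \geq \lambda_1 \min(1,h_2)\|\delta\|_V^2$, and the trace bound $\|\delta\|_{L^2(\Gamma_1)} \leq \|\gamma_0\|\,\|\delta\|_V$ then combine, after division by $\|\delta\|_V$, to yield the announced estimate.

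The main obstacle I anticipate is the algebraic bookkeeping in (\ref{eq3.72}) and (\ref{eq3.74}), where one must correctly decompose $a_{h_1} - a_{h_2}$ into an $a_{h_2}(\cdot,\cdot)$ term plus a boundary correction of the form $(h_1-h_2)\int_{\Gamma_1}\cdot\, ds$ so that the sign of the remaining right-hand side (for (\ref{eq3.72})) and the coercivity pairing (for (\ref{eq3.74})) fall out cleanly; everything else is routine testing with truncations.
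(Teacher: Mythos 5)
Your proposal is correct and follows essentially the same route as the paper's proof: the same truncation test functions $v=u_{g_h}-(u_{g_h}-b)^+$, $v=u_{g_1h_1}+(u_{g_2h_2}-u_{g_1h_1})^+$, etc., the same use of (\ref{eq3.71}) to control the sign of the $\Gamma_1$ boundary term in (\ref{eq3.72}) and to ensure admissibility in (\ref{eq3.73}), and the same Cauchy--Schwarz/trace/coercivity combination for (\ref{eq3.74}). The only difference is presentational: you make explicit the cancellation $a_h(b,w)=h\int_{\Gamma_1}bw\,ds$ and the decomposition of $a_{h_1}-a_{h_2}$, which the paper leaves implicit.
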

 \begin{proof}
Taking in (\ref{iv2}) $u= u_{g_{h}}$ and $v= u_{g_{h}}-(u_{g_{h}}-b)^{+}$ (which in $K_{+}$), we get
\begin{eqnarray*}\label{}
 - a_{h}(u_{g_{h}} \, ,\,  (u_{g_{h}}-b)^{+}) \geq -( g \, ,\, (u_{g_{h}}-b)^{+}) + \int_{\Gamma_{2}}q(u_{g_{h}}-b)^{+}ds - h\int_{\Gamma_{1}}b(u_{g_{h}}-b)^{+} ds,
\end{eqnarray*}

then
\begin{eqnarray*}
 a_{h}((u_{g_{h}}-b)^{+} \, ,\,  (u_{g_{h}}-b)^{+})
\leq  ( g \, ,\, (u_{g_{h}}-b)^{+})
- \int_{\Gamma_{2}}q(u_{g_{h}}-b)^{+}ds \leq 0,
\end{eqnarray*}
so (\ref{eq3.71}) holds.

\bigskip
To check  (\ref{eq3.72})
  we take first  in (\ref{iv2}) $v= u_{g_{1}h_{1}}+(u_{g_{2}h_{2}}-u_{g_{1}h_{1}})^{+}$,
  which is in $K_{+}$, where $u= u_{g_{1}h_{1}}$ is in $K_{+}$ with $g=g_{1}$
  and $h=h_{1}$,   and taking in (\ref{iv2})
  $v= u_{g_{2}h_{2}}-(u_{g_{2}h_{2}}-u_{g_{1}h_{1}})^{+}$, which is also in
  $K_{+}$,  where $u= u_{g_{2}h_{2}}$ is in $K_{+}$ with $g=g_{2}$  and
  $h=h_{2}$,
 then adding the two obtained inequalities we get
 \begin{eqnarray*}
  a_{h_{2}}  ( (u_{g_{2}h_{2}}-u_{g_{1}h_{1}})^{+}  \, ,\,
 (u_{g_{2}h_{2}}-u_{g_{1}h_{1}})^{+} )
 \leq ( g_{2} -g_{1}\, ,\, (u_{g_{2}h_{2}}-u_{g_{1}h_{1}})^{+} )ds
 \nonumber\\
 -
 (h_{2}- h_{1}) \int_{\Gamma_{1}}(u_{g_{1}h_{1}}-b) (u_{g_{2}h_{2}}-u_{g_{1}h_{1}})^{+}ds
 \end{eqnarray*}
and from (\ref{eq3.71}) we get (\ref{eq3.72}).

\smallskip
To check (\ref{eq3.73}), let $W= u_{g_{h}}- u_{g}$ and choose in (\ref{iv2})
$v= u_{g_{h}} -W^{+}$ which is in $K_{+}$, so
\begin{eqnarray}\label{e5.11}
a(u_{g_{h}} \, ,\, W^{+}) \leq ( g \, ,\, W^{+} ) - \int_{\Gamma_{2}}q W^{+}ds.
 \end{eqnarray}
 We choose, in (\ref{iv1}), $v= u_{g} +W^{+}$, which is in $K$ because from (\ref{eq3.71}),
 then we have $W^{+}= 0$ on $\Gamma_{1}$,  so
 \begin{eqnarray}\label{e5.12}
  a( u_{g} , W^{+}) \geq ( g \, ,\, W^{+} ) - \int_{\Gamma_{2}}q W^{+}ds.
 \end{eqnarray}
So from (\ref{e5.11}) and (\ref{e5.12}) we deduce that
  $a(W^{+} , W^{+}) \leq 0$. Then (\ref{eq3.73}) holds.

\smallskip
To finish the proof it remain to check  (\ref{eq3.74}). We choose
$v= u_{g_{h_{2}}}$ in (\ref{iv2}) where $u=u_{g_{h_{1}}}$, and  $v= u_{g_{h_{1}}}$ in (\ref{iv2}) where
$u=u_{g_{h_{2}}}$,  adding the two inequalities we get
\begin{eqnarray*}\label{e5.15}
\lambda_{1}\min\{1 , h_{2}\} \|u_{g_{h_{1}}} -
u_{g_{h_{2}}}\|_{V}^{2}
  &\leq&
 (h_{1} -h_{2})\|b-  u_{g_{h_{1}}}\|_{L^{2}(\Gamma_{1})} \|u_{g_{h_{1}}}-u_{g_{h_{2}}}\|_{L^{2}(\Gamma_{1})}
\nonumber\\
&\leq& \|\gamma_{0}\|(h_{1} -h_{2})\|b-
u_{g_{h_{1}}}\|_{L^{2}(\Gamma_{1})}
\|u_{g_{h_{1}}}-u_{g_{h_{2}}}\|_{V}.
 \end{eqnarray*}
Thus (\ref{eq3.74}) holds.
 \end{proof}


\begin{remark}\label{r10}
The Lemma {\ref{l2.3h}} gives as a first additional information
that,
 for all $g\leq 0$ in $\Omega$ and all $h>0$, the sequence $(u_{g_{h}})$
is increasing and bounded exceptionally, so it is convergent in some
space. We study, in the next sections, the optimal control problems
associated to the variational inequalities ({\ref{iv1}}) and
({\ref{iv2}}) and  the convergence when $h\to +\infty$ in Lemma
{\ref{l6.1}} and Theorem {\ref{th6.1}}  for all $g$, without
restriction to   $g\leq 0$ in $\Omega$.
\end{remark}

\section{Optimal control problems and convergence for
$h\to +\infty$}\label{ocpb} We will first study in this section two
kind of distributed optimal control problems, their existence,
uniqueness results and the relation between them. In fact the
existence and uniqueness, of the solution to the two variational
inequalities (\ref{iv1}) and (\ref{iv2}) allow us to consider
$g\mapsto u_{g}$ and  $g\mapsto u_{gh}$ as a functions from $H$ to
$V$, for any $h>0$.

Let a constant $M>0$. We  define the two cost functional $J : H \to
\br$  and  $J_{h} :  H \to \br$ such that  \cite{JLL}
(see also \cite{KeMu2008}-\cite{KeSa1999})

 \begin{equation}\label{e4.1}
J(g)= {1\over 2}\|u_{g}\|_{H}^{2}+ {M\over 2}\|g\|_{H}^{2},
\end{equation}
\begin{equation}\label{Jh}
 J_{h}(g)= {1\over 2}\|u_{gh}\|_{H}^{2}+ {M\over 2}\|g\|_{H}^{2},
\end{equation}

and we consider the family of distributed optimal control problems
\begin{equation}\label{P}
 \mbox{Find }  g_{op}\in H \quad\mbox{such that} \quad J(g_{op})= \min_{g\in H} J(g),
\end{equation}
\begin{equation}\label{Ph}
 \mbox{Find }  g_{op_{h}}\in H \quad \mbox{such that } \quad J(g_{op_{h}})= \min_{g\in H} J_{h}(g).
\end{equation}

\bigskip
\begin{lemma}\label{l3}
Let $g,\, g_{1},\, g_{2}$ in $H$ and $u_{g},\, u_{g_{1}}, \,
u_{g_{2}}$ are the associated solutions of {(\ref{iv1})}. We  have

\begin{eqnarray}\label{eq3.10}
 \|u_{3}(\mu)-u_{4}(\mu)\|^{2}_{V}
 +\mu(1-\mu)\|u_{g_{1}}-u_{g_{2}}\|^{2}_{V}
+{\mu\over \lambda} I_{14} + {(1-\mu)\over \lambda} I_{24}\nonumber\\
\leq {\mu(1-\mu)\over \lambda^{2}}
\|g_{1}-g_{2}\|^{2}_{H}.
\end{eqnarray}
For $u_{g_{h}},\, u_{g_{1h}}, \, u_{g_{2h}}$  the associated
solutions of {(\ref{iv2})}, we also have
\begin{eqnarray}\label{5.4}
\|u_{4h}(\mu)- u_{3h}(\mu)\|_{V}^{2}
+\mu(1-\mu) \|u_{g_{2h}} -u_{g_{1h}}\|^{2}_{V}
 +{\mu\over \lambda_{h}} I_{14h} + {(1-\mu)\over \lambda_{h}} I_{24h}\nonumber\\ \leq
{\mu(1-\mu)\over \lambda_{h}^{2}}\|g_{1}-g_{2}\|_{H},
\end{eqnarray}
\end{lemma}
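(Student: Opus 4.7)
The plan is to leverage Theorem \ref{th1} to handle the left-hand side and then convert the abstract bound $\mu(1-\mu)(\alpha+\beta)$ on its right-hand side into the quantitative estimate in terms of $\|g_{1}-g_{2}\|_{H}$. First I would note that when Theorem \ref{th1} is applied to problem (\ref{iv1}), the difference $u_{4}(\mu)-u_{3}(\mu)$ lies in $V_{0}$ (since both $u_{3}(\mu)$ and $u_{4}(\mu)$ belong to $K$ and therefore share the same Dirichlet trace $b$ on $\Gamma_{1}$), so the coercivity constant $m$ can be replaced by $\lambda$, yielding
\begin{equation*}
\lambda\|u_{4}(\mu)-u_{3}(\mu)\|_{V}^{2} + \mu I_{14}(\mu) + (1-\mu) I_{24}(\mu) \leq \mu(1-\mu)(\alpha+\beta).
\end{equation*}

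Next I would expand $\alpha+\beta$. Since in (\ref{iv1}) the duality pairing (\ref{L}) involves the same $q$ for $g_{1}$ and $g_{2}$, the $\Gamma_{2}$-contributions cancel in $\alpha+\beta$, giving the key identity
\begin{equation*}
\alpha+\beta = -a(u_{1}-u_{2},u_{1}-u_{2}) + (g_{1}-g_{2},u_{1}-u_{2})_{H}.
\end{equation*}
To bound the inner product, I would derive the standard Lipschitz estimate by testing (\ref{iv1}) for $u_{1}$ with $v=u_{2}$ and (\ref{iv1}) for $u_{2}$ with $v=u_{1}$; adding, using coercivity of $a$ on $V_{0}$ together with Cauchy--Schwarz and $\|w\|_{H}\leq\|w\|_{V}$, gives
\begin{equation*}
\lambda\|u_{1}-u_{2}\|_{V}^{2}\leq a(u_{1}-u_{2},u_{1}-u_{2})\leq (g_{1}-g_{2},u_{1}-u_{2})_{H}\leq\|g_{1}-g_{2}\|_{H}\|u_{1}-u_{2}\|_{V},
\end{equation*}
hence $\|u_{1}-u_{2}\|_{H}\leq\|u_{1}-u_{2}\|_{V}\leq\lambda^{-1}\|g_{1}-g_{2}\|_{H}$. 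Substituting these two facts into the expansion of $\alpha+\beta$ yields the intermediate inequality $\alpha+\beta+\lambda\|u_{1}-u_{2}\|_{V}^{2}\leq\lambda^{-1}\|g_{1}-g_{2}\|_{H}^{2}$.

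The assembly is then routine: divide the first displayed inequality by $\lambda$ and add to it the intermediate bound multiplied by $\mu(1-\mu)/\lambda$. The term $\mu(1-\mu)(\alpha+\beta)/\lambda$ cancels and (\ref{eq3.10}) drops out directly. For the second statement (\ref{5.4}), I would run the same argument verbatim with $K$ replaced by $K_{+}$, the bilinear form $a$ by $a_{h}$, and the coercivity constant $\lambda$ by $\lambda_{h}$; here the coercivity holds on all of $V$, so there is no need to check that differences lie in $V_{0}$, and the $\Gamma_{1}$- and $\Gamma_{2}$-contributions in (\ref{Lh}) still cancel in the analog of $\alpha+\beta$ because $h$ and $b$ are held fixed.

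The proof involves no single hard step; the one place requiring care is Step~1, where one must justify replacing the abstract coercivity constant $m$ of Theorem \ref{th1} by the space-dependent constants $\lambda$ and $\lambda_{h}$. The minor bookkeeping obstacle is making sure that the factor $\mu(1-\mu)$ and the powers of $\lambda$ (respectively $\lambda_{h}$) line up so that the $(\alpha+\beta)$-term cancels exactly when the two inequalities are added.
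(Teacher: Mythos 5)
Your proposal is correct and follows essentially the same route as the paper: apply Theorem \ref{th1} with the coercivity constant $\lambda$ (resp.\ $\lambda_{h}$), observe that the boundary terms cancel in $\alpha+\beta$ so that $\alpha+\beta\leq -a(u_{1}-u_{2},u_{1}-u_{2})+(g_{1}-g_{2},u_{1}-u_{2})$, and then use Cauchy--Schwarz together with the Lipschitz bound $\|u_{1}-u_{2}\|_{V}\leq\lambda^{-1}\|g_{1}-g_{2}\|_{H}$ before dividing by $\lambda$. Your explicit remark that $u_{3}(\mu)-u_{4}(\mu)$ and $u_{1}-u_{2}$ lie in $V_{0}$, which justifies using $\lambda$ in place of the abstract constant $m$, is a point the paper leaves implicit.
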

\begin{proof}
For $i=1, 2$ we have
$$I_{i4}(\mu) =  a(u_{i}\, , \, u_{4}(\mu) - u_{i})
-( g_{i} , u_{4}(\mu) - u_{i}) +  \int_{\Gamma_{2}} q (
u_{4}(\mu)-u_{i})ds \geq 0$$
and therefore by using Theorem
\ref{th1} and (\ref{L}) we obtain
$$\lambda\|u_{3}(\mu)-u_{4}(\mu)\|^{2}_{V}
+\mu I_{14} + (1-\mu)I_{24}
 \leq \mu(1-\mu) (\alpha + \beta)
\qquad  \forall \mu\in [0 , 1].$$
As
\begin{eqnarray*}
\alpha + \beta &=& a( u_{1} , u_{2}-u_{1}) - (g_{1}  ,  u_{2}-u_{1})
+ \int_{\Gamma_{2}} q ( u_{2}-u_{1})ds
\nonumber\\
 && +  a( u_{2} , u_{1}-u_{2}) - (g_{2}  ,  u_{1}-u_{2})
+ \int_{\Gamma_{2}} q ( u_{1}-u_{2})ds
\nonumber\\
 &\leq&- a(u_{2}- u_{1}  , u_{2}-u_{1}) + (g_{2} -  g_{1}  ,  u_{2}-u_{1})
\nonumber\\
 &\leq& -  \lambda\|u_{2}- u_{1}\|_{V}^{2} + \|g_{2} -  g_{1}\|_{H}\|u_{2}- u_{1}\|_{H}
\nonumber\\
 &\leq& - \lambda \|u_{2}- u_{1}\|_{V}^{2} +{1\over\lambda } \|g_{2} -  g_{1}\|_{H}^{2}
\end{eqnarray*}
thus (\ref{eq3.10}) follows. (\ref{5.4}) follows also from Theorem
\ref{th1} and (\ref{Lh}) as above.
\end{proof}

By using  Lemma \ref{l3} and the references
  \cite{Barbu84a}, \cite{JLL}, we can obtain firstly the existence (not the uniqueness)
  of optimal controls $g_{op}$ and
$g_{op_{h}}$ solution of Problem (\ref{P}) and Problem (\ref{Ph})
respectively. Then, the corresponding uniqueness of the optimal
control problems can be obtained by using (\cite{Mingot1}, pages 166
and 177). Secondly, in order to avoid the use of the conical
differentiability (see \cite{Mingot1}) and by completeness of the
proof of the result we can do another proof of the uniqueness of the
optimal control problems which is not given in \cite{Mingot1}. For
that, we can prove two important equalities (\ref{4.9}) and
(\ref{4.9h}) which allow us to get that $J$ and $J_{h}$  are
strictly convex applications on $H$, so there exist the unique
solutions $g_{op}$ and $g_{op_{h}}$ in $H$ to  the Problem
{\rm(\ref{P})} and Problem {\rm(\ref{Ph})} respectively. This fact
is also very important for us because it permits us to obtain the
convergence in Theorem {\ref{th6.1}}, our mean result, without using
the adjoint state problem.

\begin{proposition}\label{th4.1}
 Let given $g$ in $H$ and  $h>0$,  there exist unique solutions $g_{op}$ and
$g_{op_{h}}$  in $H$ respectively for the Problems {\rm(\ref{P})}
and {\rm(\ref{Ph})}.
\end{proposition}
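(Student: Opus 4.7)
The plan is to prove the proposition by the classical direct method for existence and by establishing strict convexity of $J$ and $J_h$ for uniqueness. The crucial input for strict convexity is Mignot's monotonicity $u_{4}(\mu)\leq u_{3}(\mu)$ alluded to in Remark \ref{r1}, which holds precisely in the obstacle setting of Section \ref{pobs}; this is what allows the proof to bypass any use of an adjoint state or conical differentiability.

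For existence, both cost functionals satisfy $J(g),\,J_h(g)\geq \tfrac{M}{2}\|g\|_H^{2}$ and are therefore coercive on $H$. Given a minimizing sequence $\{g_n\}$ for $J$, coercivity forces $g_n$ to be bounded in $H$; extracting a subsequence with $g_n\rightharpoonup g^{*}$ weakly in $H$, Lemma \ref{l2.3}\,a) yields $u_{g_n}\to u_{g^{*}}$ strongly in $V$, hence in $H$, while weak lower semicontinuity gives $\|g^{*}\|_H\leq\liminf\|g_n\|_H$. Passing to the limit, $J(g^{*})\leq \liminf J(g_n)=\inf_H J$, so $g^{*}=g_{op}$ is an optimal control. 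The identical argument invoking Lemma \ref{l2.3}\,b) instead produces $g_{op_h}$ for each fixed $h>0$.

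For uniqueness, I would establish the convexity identity
\begin{equation*}
\mu J(g_1)+(1-\mu)J(g_2)-J(\mu g_1+(1-\mu)g_2)=\tfrac{1}{2}\bigl(\|u_{3}(\mu)\|_H^{2}-\|u_{4}(\mu)\|_H^{2}\bigr)+\tfrac{\mu(1-\mu)}{2}\bigl(\|u_{g_1}-u_{g_2}\|_H^{2}+M\|g_1-g_2\|_H^{2}\bigr),
\end{equation*}
which follows from the elementary quadratic identity $\mu\|x\|^{2}+(1-\mu)\|y\|^{2}-\|\mu x+(1-\mu)y\|^{2}=\mu(1-\mu)\|x-y\|^{2}$ applied to the pairs $(u_{g_1},u_{g_2})$ and $(g_1,g_2)$, combined with the decomposition $\|u_4(\mu)\|_H^{2}=\|u_3(\mu)\|_H^{2}-\bigl(\|u_3(\mu)\|_H^{2}-\|u_4(\mu)\|_H^{2}\bigr)$.

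The main, and really only, nontrivial step is the nonnegativity of the bracket $\|u_{3}(\mu)\|_H^{2}-\|u_{4}(\mu)\|_H^{2}$. This is where Mignot's monotonicity enters: in the obstacle setting of Section \ref{pobs} one has $0\leq u_{4}(\mu)\leq u_{3}(\mu)$ a.e.\ in $\Omega$ (and analogously $0\leq u_{4h}(\mu)\leq u_{3h}(\mu)$ for the Robin problem \eqref{iv2}), so squaring pointwise and integrating on $\Omega$ gives $\|u_4(\mu)\|_H^{2}\leq\|u_3(\mu)\|_H^{2}$. Combined with the strictly positive term $\tfrac{M\mu(1-\mu)}{2}\|g_1-g_2\|_H^{2}$, the identity is strictly positive whenever $g_1\neq g_2$ and $\mu\in(0,1)$, proving strict convexity of $J$ on $H$ and hence uniqueness of $g_{op}$. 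The same reasoning with $u_{3h},\,u_{4h}$ in place of $u_{3},\,u_{4}$ (using estimate \eqref{5.4} in Lemma \ref{l3} as a quantitative backup if needed) delivers strict convexity of $J_h$ and uniqueness of $g_{op_h}$, completing the proof.
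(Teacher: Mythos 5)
Your proposal is correct and follows essentially the same route as the paper: existence by coercivity (from the $\tfrac{M}{2}\|g\|_H^2$ term) plus weak lower semicontinuity via the continuity $g_n\rightharpoonup g \Rightarrow u_{g_n}\to u_g$ of Lemma \ref{l2.3}, and uniqueness by the same convexity identity combined with Mignot's monotonicity $0\le u_4(\mu)\le u_3(\mu)$ to get $\|u_4(\mu)\|_H\le\|u_3(\mu)\|_H$ and strict convexity from the $M\|g_1-g_2\|_H^2$ term. If anything, your final lower bound correctly carries the $H$-norm of $u_{g_1}-u_{g_2}$ where the paper's inequality (\ref{n}) writes a $V$-norm, and you correctly attribute strict positivity to the $M$-term alone.
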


\begin{proof}
We  remark first  that  using  Lemma \ref{l3} and (\cite{Barbu84a},
\cite{GT}, \cite{JLL}, \cite{Mingot1}) we can obtain the following
classical results
\begin{equation*}\label{4.6}
 \lim_{\|g\|_{H}\to +\infty}J(g) =+\infty,  \quad\mbox{ and }\quad  \lim_{\|g\|_{H}\to +\infty}J_{h}(g) =+\infty,
\end{equation*}
\begin{equation*}\label{4.7}
 J \mbox{ and  } J_{h} \quad \forall h>0, \mbox{ are lower semi-continuous on } H \mbox{ weak,}
\end{equation*}
so we can deduce the existence, of at least, an optimal control
$g_{op}$  solution of Problem {\rm(\ref{P})} and  respectively
 an optimal control $g_{op_{h}}$ solution of Problem {\rm(\ref{Ph})}.

 The uniqueness of the solutions of Problems {\rm(\ref{P})} and {\rm(\ref{Ph})}
can be also obtained by using (\cite{Mingot1}, pages 166 and 177).
For completeness we will prove that the cost functional $J$ and
$J_{h}$ are strictly convex applications on $H$ which are not given
in \cite{Mingot1}. Let $u=u_{g_{i}}$ and $u_{g_{i}h}$ be
respectively the solution of the variational inequalities
{\rm(\ref{iv1})}  and {\rm(\ref{iv2})} with $g=g_{i}$ for $i=1 , 2$.
We have
\begin{eqnarray*}
\|u_{3}(\mu)\|_{H}^{2} = \mu^{2} \|u_{g_{1}}\|_{H}^{2} + (1-\mu)^{2}\|u_{g_{2}}\|_{H}^{2} + 2 \mu(1-\mu)(u_{g_{1}} , u_{g_{2}})
\end{eqnarray*}
then the following equalities hold
\begin{equation}\label{4.9}
\|u_{3}(\mu)\|_{H}^{2} = \mu \|u_{g_{1}}\|_{H}^{2} +
(1-\mu)\|u_{g_{2}}\|_{H}^{2} -\mu(1-\mu)\|u_{g_{2}}-
u_{g_{1}}\|_{H}^{2},
\end{equation}
\begin{equation}\label{4.9h}
\|u_{3h}(\mu)\|_{H}^{2} = \mu \|u_{g_{1}h}\|_{H}^{2} + (1-\mu)\|u_{g_{2}h}\|_{H}^{2}
-\mu(1-\mu)\|u_{g_{2}h}- u_{g_{1}h}\|_{H}^{2}.
\end{equation}
Let now $\mu\in [0 , 1]$ and   $g_{1}, g_{2} \in H$ so we have
\begin{eqnarray*}
\mu J(g_{1})+ (1-\mu)J(g_{2})- J(g_{3}(\mu))= {\mu \over
2}\|u_{g_{1}}\|_{H}^{2} + {(1-\mu) \over 2}\|u_{g_{2}}\|_{H}^{2}
\nonumber\\
-{1 \over 2}\|u_{4}(\mu)\|_{H}^{2} +{M \over 2}\left\{\mu
\|g_{1}\|_{H}^{2} +(1-\mu)\|g_{2}\|_{H}^{2}
-\|g_{3}(\mu)\|_{H}^{2}\right\}
\end{eqnarray*}
and by using (\ref{4.9}) for $g_{3}(\mu)= \mu g_{1} + (1-\mu)g_{2}$
we obtain
\begin{eqnarray}\label{c311}
\mu J(g_{1})+ (1-\mu)J(g_{2})- J(g_{3}(\mu))=
{1 \over 2}\{\mu\|u_{g_{1}}\|_{H}^{2}  + (1-\mu)\|u_{g_{2}}\|_{H}^{2}
-\|u_{4}(\mu)\|_{H}^{2}\}
\nonumber\\
+{M \over 2}\mu(1-\mu) \|g_{1}-g_{2}\|_{H}^{2}.
\end{eqnarray}
Following \cite{Mingot1} we obtain the cornerstone monotony property
\begin{equation}\label{eq3.4}
 u_{4}(\mu) \leq u_{3}(\mu)\quad in \quad \Omega,
    \quad \forall \mu\in [0 , 1],
\end{equation}
and as $u_{4}(\mu)\in K$ so $u_{4}(\mu) \geq 0$ in $\Omega$
for all $\mu\in [0 , 1]$,  we deduce
\begin{eqnarray*}\label{n1}
 \|u_{4}(\mu)\|_{H}^{2}\leq \|u_{3}(\mu)\|_{H}^{2}, \quad \forall \mu\in [0 ,
 1].
\end{eqnarray*}
By using (\ref{4.9}) we have
$$\mu\|u_{g_{1}}\|_{H}^{2}  + (1-\mu)\|u_{g_{2}}\|_{H}^{2}
-\|u_{4}(\mu)\|_{H}^{2}=
\|u_{3}(\mu)\|_{H}^{2}-\|u_{4}(\mu)\|_{H}^{2}+ \mu(1-\mu)
\|u_{g_{1}}-u_{g_{2}}\|_{H}^{2}$$ which is positive for all $\mu\in
[0 , 1]$. Finally we deduce from (\ref{c311}) that
\begin{eqnarray}\label{n}
\mu J(g_{1})+ (1-\mu)J(g_{2})- J(g_{3})
\geq {\mu(1-\mu) \over 2}\left\{\|u_{g_{1}}-u_{g_{2}}\|_{V}^{2} + M \|g_{1}-g_{2}\|_{H}^{2}\right\}
> 0
\end{eqnarray}
for all $\mu\in ]0 , 1[$ and  for all $g_{1}, g_{2}$ in $H$. So $J$ is a strictly convex functional,
 thus the uniqueness of the optimal control for the Problem (\ref{P}) holds.

The uniqueness of the optimal control of the Problem (\ref{Ph})
follows using the analogous inequalities (\ref{c311})-(\ref{n}) for
any $h>0$, that is
\begin{eqnarray}\label{c311h}
\mu J_{h}(g_{1})+ (1-\mu)J_{h}(g_{2})- J_{h}(g_{3}(\mu))&=& {1 \over
2}\{\mu\|u_{g_{1}h}\|_{H}^{2}  + (1-\mu)\|u_{g_{2}h}\|_{H}^{2}
-\|u_{4h}(\mu)\|_{H}^{2}\}
\nonumber\\
&&+{M \over 2}\mu(1-\mu) \|g_{1}-g_{2}\|_{H}^{2}
\end{eqnarray}
from
\begin{equation}\label{eq3.4h}
 u_{4h}(\mu) \leq u_{3h}(\mu)\quad in \quad \Omega,
\end{equation}
so we get
\begin{equation}\label{n1h}
 \|u_{4h}(\mu)\|_{H}^{2}\leq \|u_{3h}(\mu)\|_{H}^{2},
\end{equation}
and obtain
\begin{eqnarray*}\label{nh}
\mu J_{h}(g_{1})+ (1-\mu)J_{h}(g_{2})- J_{h}(g_{3}) \geq {\mu(1-\mu)
\over 2}\left\{\|u_{g_{1h}}-u_{g_{2h}}\|_{V}^{2} + M
\|g_{1}-g_{2}\|_{H}^{2}\right\}
> 0
\end{eqnarray*}
for all $\mu\in ]0 , 1[$, for all $h>0$ and for all $g_{1}, g_{2}$
in $H$. So $J_{h}$ is also a strictly convex functional, thus the
uniqueness of the optimal control for the Problem (\ref{Ph}) holds.
\end{proof}

\begin{remark}\label{r100}
The Proposition {\rm {\ref{th4.1}}} is automatically true (and then
it is not necessary in order to study the convergence given in {\rm
Theorem {\ref{th6.1}})} when the equivalence {\rm(\ref{u3})} is
verified for all $g_{1}, g_{2}$ in $H$.
\end{remark}

Now we study the convergence of  the state $u_{{g_{op}}_{h}h}$, and
the optimal control ${g_{op}}_{h}$, when the heat transfer
coefficient $h$ on $\Gamma_{1}$,
 goes to infinity.  For a given fixed $g\in H$, we have the
 following property which generalizes the one obtained for
 variational equality in \cite{T,TT}. After that, we can study the
 limit $h\to +\infty$ for the general optimal control problems.

\begin{lemma}\label{l6.1}  Let $u_{g_{h}}$ the unique solution of
the variational inequality
 {\rm(\ref{iv2})} and $u_{g}$ the unique solution of the variational inequality {\rm(\ref{iv1})}, then
 $$u_{g_{h}}\to u_{g}\quad in \quad V  \mbox{ strongly  as } h\to +\infty \quad \forall g\in  H.$$
\end{lemma}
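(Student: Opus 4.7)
My plan is the standard three-step scheme: uniform a priori bounds, extraction of a weak subsequential limit identified as a solution of (\ref{iv1}), and upgrade to strong convergence. The key algebraic observation, which I use repeatedly, is that whenever (\ref{iv2}) is tested with some $v\in K$ (so that $v|_{\Gamma_1}=b$), the two $h$-dependent boundary contributions combine by completion of the square. Indeed, expanding $a_h = a + h(\cdot,\cdot)_{L^{2}(\Gamma_1)}$ in
\begin{equation*}
a_h(u_{g_h},v-u_{g_h}) \geq (g,v-u_{g_h}) - \int_{\Gamma_2} q(v-u_{g_h})\,ds + h\int_{\Gamma_1} b(v-u_{g_h})\,ds
\end{equation*}
and collecting boundary terms using $v|_{\Gamma_1}=b$ reduces the inequality to
\begin{equation*}
a(u_{g_h},v-u_{g_h}) \geq (g,v-u_{g_h}) - \int_{\Gamma_2} q(v-u_{g_h})\,ds + h\|u_{g_h}-b\|^{2}_{L^{2}(\Gamma_1)} \qquad (\star)
\end{equation*}
for every $v\in K$.

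\textbf{Step 1 (a priori bounds).} Fix any $B\in K$ (for instance a lift of $b$) and apply $(\star)$ with $v=B$. The right-hand side is linear in $\|u_{g_h}\|_V$, while the left absorbs both the nonnegative quantities $a(u_{g_h},u_{g_h})$ and $h\|u_{g_h}-b\|^{2}_{L^{2}(\Gamma_1)}$. Bounding $\|u_{g_h}\|^{2}_{L^{2}(\Gamma_1)}\leq 2\|u_{g_h}-b\|^{2}_{L^{2}(\Gamma_1)}+2\|b\|^{2}_{L^{2}(\Gamma_1)}$ and inserting into the Poincar\'e-type equivalence $\lambda_{1}\|v\|_V^{2}\leq a(v,v)+\|v\|^{2}_{L^{2}(\Gamma_1)}$ (which is the coercivity of $a_h$ with $h=1$) yields a quadratic inequality in $\|u_{g_h}\|_V$, giving simultaneously $\|u_{g_h}\|_V\leq C$ and $h\|u_{g_h}-b\|^{2}_{L^{2}(\Gamma_1)}\leq C$ uniformly in $h\geq 1$.

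\textbf{Step 2 (identification of the weak limit).} Along a subsequence, $u_{g_h}\rightharpoonup u^{*}$ in $V$, strongly in $H$ and with trace strongly convergent in $L^{2}(\partial\Omega)$. The trace bound $\|u_{g_h}-b\|_{L^{2}(\Gamma_1)}=O(h^{-1/2})$ forces $u^{*}|_{\Gamma_1}=b$; weak closedness of $K_{+}$ gives $u^{*}\geq 0$; hence $u^{*}\in K$. For arbitrary $v\in K$, drop the nonnegative $h$-term in $(\star)$ and take $\limsup$ as $h\to +\infty$: the linear terms pass to their limits by strong convergence, while $a(u_{g_h},v)\to a(u^{*},v)$ combined with the weak lower semi-continuity $\liminf a(u_{g_h},u_{g_h})\geq a(u^{*},u^{*})$ gives $a(u^{*},v-u^{*})\geq (g,v-u^{*})-\int_{\Gamma_2} q(v-u^{*})\,ds$. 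Thus $u^{*}$ solves (\ref{iv1}); by uniqueness $u^{*}=u_g$, and the whole family converges weakly to $u_g$.

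\textbf{Step 3 (strong convergence).} Testing $(\star)$ with $v=u_g\in K$ and taking $\limsup$ yields $\limsup a(u_{g_h},u_{g_h})\leq a(u_g,u_g)$ (the linear terms passing exactly as in Step 2). The reverse inequality follows from weak lower semi-continuity, so $a(u_{g_h},u_{g_h})\to a(u_g,u_g)$, whence $a(u_{g_h}-u_g,u_{g_h}-u_g)\to 0$ by expansion. Since $u_g|_{\Gamma_1}=b$ and $\|u_{g_h}-b\|_{L^{2}(\Gamma_1)}\to 0$ by Step 1, the same norm equivalence delivers $\|u_{g_h}-u_g\|_V\to 0$. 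The principal difficulty is that $a$ by itself is coercive only on $V_0$, not on all of $V$; the completion-of-the-square term $h\|u_{g_h}-b\|^{2}_{L^{2}(\Gamma_1)}$ appearing on the left-hand side of $(\star)$ is thus indispensable—it supplies the missing boundary coercivity for the a priori bound and, simultaneously, plays the role of the penalty that converts the Robin boundary condition on $\Gamma_1$ into the Dirichlet trace $u|_{\Gamma_1}=b$ in the limit $h\to+\infty$.
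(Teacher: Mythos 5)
Your proof is correct and follows essentially the same route as the paper's: you test (\ref{iv2}) with elements of $K$ so that the two $h$-dependent boundary terms complete to $h\|u_{g_{h}}-b\|^{2}_{L^{2}(\Gamma_{1})}$, extract uniform bounds together with the $O(h^{-1/2})$ trace estimate, identify the weak limit through the uniqueness of the solution of (\ref{iv1}), and upgrade to strong convergence using the coercivity of $a_{1}$ on $V$. The only cosmetic difference is that the paper takes $v=u_{g}$ already in the a priori estimate, which bounds $\|u_{g_{h}}-u_{g}\|_{V}$ and $(h-1)\|u_{g_{h}}-u_{g}\|^{2}_{L^{2}(\Gamma_{1})}$ directly and makes the final strong-convergence step immediate, whereas you use a generic $B\in K$ and conclude via convergence of the energies $a(u_{g_{h}},u_{g_{h}})\to a(u_{g},u_{g})$.
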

\begin{proof}
We take  $v=u_{g}$ in (\ref{iv2}) where $u= u_{g_{h}}$,  recalling that $u_{g}= b$ on $\Gamma_{1}$ and  $h> 1$, we obtain
\begin{eqnarray}\label{q4.1}
&&a_{1}(u_{g_{h}} - u_{g}, u_{g_{h}}- u_{g})  + (h-1)\int_{\Gamma_{1}}(u_{g_{h}} - u_{g})^{2} ds \nonumber\\
&\leq& ( g  ,u_{g_{h}}- u_{g})
     - \int_{\Gamma_{2}} q  (u_{g_{h}}-u_{g})ds
+ \int_{\Gamma_{1}} b  (u_{g_{h}}-u_{g})ds - a_{1}(u_{g}, u_{g_{h}}- u_{g})
\nonumber\\
&\leq& (g  ,u_{g_{h}}- u_{g})
     - \int_{\Gamma_{2}} q  (u_{g_{h}}-u_{g})ds
- a(u_{g}, u_{g_{h}}- u_{g}).
\end{eqnarray}
From what we deduce
that $\|u_{g_{h}}-u_{g}\|_{V}$ and $(h-1)\|u_{g_{h}}-u_{g}\|_{L^{2}(\Gamma_{1})}$ are bounded for all $h>1$. So there exists $\eta\in V$ such that $u_{g_{h}}\tow \eta$ weakly in $V$
 and $\eta \in K$. From (\ref{iv2}) we have also
\begin{equation*}
a(u_{g_{h}} , v- u_{g_{h}}) +  h\int_{\Gamma_{1}} (u_{g_{h}}- b) (v- u_{g_{h}})ds  \geq ( g  , v - u_{g_{h}}) - \int_{\Gamma_{2}} q ( v- u_{g_{h}})ds \quad \forall v\in K_{+},
\end{equation*}
taking $v\in K$ so $v= b$ on $\Gamma_{1}$, thus
\begin{equation}\label{eq6.1}
a(u_{g_{h}} ,  u_{g_{h}})   \leq a(u_{g_{h}} , v)- ( g  , v - u_{g_{h}}) + \int_{\Gamma_{2}} q ( v- u_{g_{h}})ds \quad \forall v\in K.
\end{equation}
Thus we can pass to the limit in (\ref{eq6.1}), for $h\to +\infty$,  to obtain
\begin{equation*}
a(\eta , v- \eta) \geq ( g  , v - \eta) - \int_{\Gamma_{2}} q ( v-\eta)ds \quad \forall v\in K.
\end{equation*}
Using the uniqueness of the solution of (\ref{iv1}) we get that $\eta= u_{g}$.

\bigskip To prove the strong convergence of $u_{g_{h}}$ to $u_{g}$,  when $h\to +\infty$, it is sufficient to use the inequality (\ref{q4.1}) and the weak convergence of $u_{g_{h}}$ to $\eta=u_{g}$ for all $g\in H$.
This ends the proof.
\end{proof}

\bigskip
We give now the main result of the paper which generalizes, for
optimal control problems governed by elliptic variational
inequalities, the convergence result obtained in \cite{GT}.
Moreover, this convergence is obtained without need of
 the adjoint states. We remark here the double dependence on the parameter $h$ in
 the expression of state of the system $u_{{g_{op}}_{h}h}$ corresponding to the optimal control
 ${g_{op}}_{h}$.

\begin{theorem}\label{th6.1}
Let $u_{{g_{op}}_{h}h}$, ${g_{op}}_{h}$  and $u_{g_{op}}$, $g_{op}$
are the states and the optimal controls defined in the problems
{\rm(\ref{Ph})} and {\rm(\ref{P})} respectively. Then, we obtain the
following asymptotic behavior:
\begin{equation}\label{6.1}
 \lim_{h\to +\infty}\|u_{{g_{op}}_{h}h}-u_{g_{op}}\|_{V}= 0.
\end{equation}
\begin{equation}\label{6.2}
 \lim_{h\to +\infty}\|{g_{op}}_{h}-g_{op}\|_{H}= 0.
\end{equation}
\end{theorem}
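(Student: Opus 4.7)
The plan is to combine Lemma 4.2 (fixed-$g$ convergence $u_{gh}\to u_g$ in $V$ as $h\to +\infty$), a uniform $H$-bound on $\{g_{op_h}\}$ coming from optimality, a joint ``coupled limit'' lemma, and the uniqueness of $g_{op}$ from Proposition 4.1, to avoid any adjoint state machinery.

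\textbf{Step 1: Uniform bound on the optimal controls.} By optimality $J_h(g_{op_h})\le J_h(g_{op})$. Lemma 4.2 yields $u_{g_{op}\,h}\to u_{g_{op}}$ strongly in $V$, hence in $H$, as $h\to +\infty$; therefore $\|u_{g_{op}\,h}\|_H$ is bounded and so $J_h(g_{op})$ is bounded uniformly in $h$. From $\frac{M}{2}\|g_{op_h}\|_H^2\le J_h(g_{op_h})\le J_h(g_{op})$ one obtains $\|g_{op_h}\|_H\le C$. Extract a subsequence with $g_{op_h}\rightharpoonup f$ weakly in $H$ for some $f\in H$.

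\textbf{Step 2: Coupled passage to the limit in the state equation.} The key technical step is to prove that, if $g_h\rightharpoonup f$ weakly in $H$ and $h\to +\infty$, then $u_{g_h h}\to u_f$ strongly in $V$. Testing (\ref{iv2}) at $v=u_f\in K\subset K_+$ and mimicking the argument of Lemma 4.2, one gets that $\|u_{g_h h}\|_V$ and $\sqrt{h}\,\|u_{g_h h}-b\|_{L^2(\Gamma_1)}$ are bounded uniformly in $h$ (since $\|g_h\|_H$ is bounded). Therefore some subsequence satisfies $u_{g_h h}\rightharpoonup \eta$ weakly in $V$, strongly in $H$ and on $L^2(\Gamma_1)$; the trace bound forces $\eta=b$ on $\Gamma_1$, i.e.\ $\eta\in K$. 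Rewriting (\ref{iv2}) in the form (\ref{eq6.1}) (test with $v\in K$) eliminates the $h$-term on $\Gamma_1$; one then passes to the limit using the weak convergence of $u_{g_h h}$, the strong $H$-convergence of the $g_h$-term $(g_h,v-u_{g_h h})$ (weak times strong), and the weak lower semicontinuity of $a(\cdot,\cdot)$, to get that $\eta$ solves (\ref{iv1}) with source $f$. By uniqueness $\eta=u_f$. Strong convergence in $V$ is then recovered by the standard trick: plug $v=u_f$ in (\ref{iv2}) and $v=u_{g_h h}$ in the limit problem, add, and use coercivity to squeeze $\|u_{g_h h}-u_f\|_V\to 0$.

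\textbf{Step 3: Identification $f=g_{op}$.} Apply Step 2 with $g_h=g_{op_h}$ to get $u_{g_{op_h}h}\to u_f$ strongly in $V$, hence in $H$. By weak lower semicontinuity of $\|\cdot\|_H$ and strong convergence of $\|u_{g_{op_h}h}\|_H^2\to \|u_f\|_H^2$,
\begin{equation*}
J(f)=\tfrac12\|u_f\|_H^2+\tfrac{M}{2}\|f\|_H^2\le \liminf_{h\to+\infty}\!\Big(\tfrac12\|u_{g_{op_h}h}\|_H^2+\tfrac{M}{2}\|g_{op_h}\|_H^2\Big)=\liminf_{h\to+\infty}J_h(g_{op_h}).
\end{equation*}
On the other hand $J_h(g_{op_h})\le J_h(g_{op})$, and by Lemma 4.2, $J_h(g_{op})\to J(g_{op})$. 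Combining yields $J(f)\le J(g_{op})$, and by the uniqueness part of Proposition 4.1 we conclude $f=g_{op}$. Since the limit is unique, the whole family $g_{op_h}$ converges weakly in $H$ and $u_{g_{op_h}h}$ converges strongly in $V$, proving (\ref{6.1}).

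\textbf{Step 4: Strong convergence of the controls.} The chain of inequalities in Step 3 is in fact an equality in the limit, so
\begin{equation*}
\limsup_{h\to+\infty}\tfrac{M}{2}\|g_{op_h}\|_H^2=\limsup_{h\to+\infty}\big(J_h(g_{op_h})-\tfrac12\|u_{g_{op_h}h}\|_H^2\big)\le J(g_{op})-\tfrac12\|u_{g_{op}}\|_H^2=\tfrac{M}{2}\|g_{op}\|_H^2.
\end{equation*}
Together with $\|g_{op}\|_H\le\liminf\|g_{op_h}\|_H$, this gives $\|g_{op_h}\|_H\to \|g_{op}\|_H$. Combined with the weak convergence $g_{op_h}\rightharpoonup g_{op}$ in the Hilbert space $H$, this yields strong convergence, i.e.\ (\ref{6.2}).

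The main obstacle is Step 2, the simultaneous limit in $h$ and in the data $g_h$: Lemma 3.3(b) handles only fixed $h$ and Lemma 4.2 only fixed $g$, so neither applies directly and one must reproduce both mechanisms at once, in particular controlling the boundary integral $h\int_{\Gamma_1}(u_{g_h h}-b)(v-u_{g_h h})\,ds$ either by choosing test functions in $K$ to cancel it, or by exploiting the uniform bound on $\sqrt h\|u_{g_h h}-b\|_{L^2(\Gamma_1)}$.
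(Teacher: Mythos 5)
Your proposal follows essentially the same route as the paper's proof: a uniform bound on $J_h(g_{op_h})$ by comparison with a fixed control (you use $g_{op}$ via Lemma \ref{l6.1}, the paper uses $g=0$ with a direct bound on $\|u_{0_h}\|_V$ -- both work), extraction of weak limits, the coupled passage to the limit in (\ref{iv2}) with test functions in $K$ to kill the $h$-boundary term and identify the limit as $u_f$, identification $f=g_{op}$ by lower semicontinuity plus uniqueness from Proposition \ref{th4.1}, and recovery of strong convergence of the controls from norm convergence plus weak convergence in $H$. One small slip in your Step 2: you cannot ``plug $v=u_{g_h h}$ in the limit problem,'' since (\ref{iv1}) is posed on $K$ and $u_{g_h h}$ only lies in $K_+$ (its trace on $\Gamma_1$ need not equal $b$); the fix is the one the paper uses -- test (\ref{iv2}) alone at $v=u_f$, note that $u_f=b$ on $\Gamma_1$ turns the boundary term into $-h\int_{\Gamma_1}(u_{g_h h}-b)^2\le 0$, and let the right-hand side tend to zero using the already established weak convergences together with the coercivity of $a_1$.
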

\begin{proof}
We have first
\begin{eqnarray*}
 J_{h}(g_{op_{h}})= {1\over 2}\|u_{g_{op_{h}h}}\|_{H}^{2} + {M\over 2}\|g_{op_{h}}\|_{H}^{2}
\leq {1\over 2}\|u_{g_{h}}\|_{H}^{2} + {M\over 2}\|g\|_{H}^{2},
\quad \forall g\in H
\end{eqnarray*}
then for $g=0\in H$ we obtain that
\begin{eqnarray}\label{e6.3}
J_{h}(g_{op_{h}})= {1\over 2}\|u_{g_{op_{h}h}}\|_{H}^{2} + {M\over
2}\|g_{op_{h}}\|_{H}^{2}\leq {1\over 2}\|u_{0_{h}}\|_{H}^{2}
\end{eqnarray}
where $u_{0_{h}}\in K_{+}$ is solution of the following elliptic
variational inequality
$$ a_{h}( u_{0_{h}} , v- u_{0_{h}}) \geq -\int_{\Gamma_{2}} q(v- u_{0_{h}})ds + h\int_{\Gamma_{1}} b(v- u_{0_{h}})ds \qquad \forall v\in K_{+}. $$
Taking $v= B$ with $B\in K_{+}$ such that $B=b$ on $\Gamma_{1}$, we get
\begin{eqnarray*}
 a_{1}( u_{0_{h}} , u_{0_{h}}) + (h-1) \int_{\Gamma_{1}}  (u_{0_{h}}-b)^{2} ds
\leq a_{1}( u_{0_{h}} , B) +\int_{\Gamma_{2}} q(B- u_{0_{h}})ds
+ \int_{\Gamma_{1}} b(u_{0_{h}}- b)ds
\end{eqnarray*}
thus   $\|u_{0_{h}}\|_{V}$ is bounded independently of $h$, then
from $\|u_{0_{h}}\|_{H}\leq \|u_{0_{h}}\|_{V}$, we deduce that
$\|u_{0_{h}}\|_{H}$ is bounded independently of $h$. So we deduce
 with (\ref{e6.3}) that $\|u_{g_{op_{h}h}}\|_{H}$ and $\|g_{op_{h}}\|_{H}$ are also bounded
 independently of $h$. So there exists $f$ and $\xi$ in  $H$ such that
\begin{eqnarray}\label{6.5}
 g_{op_{h}} \tow  f \quad in \quad H \quad (weak)\qquad {\rm and }\quad u_{g_{op_{h}h}}\tow\xi \quad in \quad H \quad (weak).
\end{eqnarray}
Taking  now $v=u_{g_{op}}\in K\subset K_{+}$ in  (\ref{iv2}) with $u= u_{g_{op_{h}}h}$ and $g=g_{op_{h}}$, we obtain
\begin{eqnarray*}
 a_{1}( u_{g_{op_{h}}h} , u_{g_{op}} - u_{g_{op_{h}}h})
+(h-1) \int_{\Gamma_{1}}u_{g_{op_{h}}h}(u_{g_{op}} - u_{g_{op_{h}}h})ds
\geq ( g_{op_{h}} , u_{g_{op}} - u_{g_{op_{h}}h}) \nonumber\\- \int_{\Gamma_{2}} q (u_{g_{op}} - u_{g_{op_{h}}h})ds
+
h\int_{\Gamma_{1}} b (u_{g_{op}} - u_{g_{op_{h}}h})ds
\end{eqnarray*}
as $ u_{g_{op}}= b$ on $\Gamma_{1}$ we obtain
\begin{eqnarray*}
 a_{1}( u_{g_{op_{h}}h} -  u_{g_{op}} , u_{g_{op}} - u_{g_{op_{h}}h})
-(h-1) \int_{\Gamma_{1}}(u_{g_{op_{h}}h}- b)^{2}ds
\geq ( g_{op_{h}} , u_{g_{op}} - u_{g_{op_{h}}h})
\nonumber\\- \int_{\Gamma_{2}} q (u_{g_{op}} - u_{g_{op_{h}}h})ds
+
\int_{\Gamma_{1}} b (b - u_{g_{op_{h}}h})ds - a_{1}(u_{g_{op}} , u_{g_{op}} - u_{g_{op_{h}}h})
\end{eqnarray*}
so
\begin{eqnarray*}
a_{1}( u_{g_{op_{h}}h} -u_{g_{op}} ,  u_{g_{op_{h}}h} -u_{g_{op}}  )
+ (h-1)\int_{\Gamma_{1}} (u_{g_{op_{h}}h}- b)^{2}ds \leq
\nonumber\\
\leq ( g_{op_{h}} ,   u_{g_{op_{h}}h} - u_{g_{op}} ) -
 \int_{\Gamma_{2}} q (  u_{g_{op_{h}}h}-   u_{g_{op}})ds
-a( u_{g_{op}} ,  u_{g_{op_{h}}h} -u_{g_{op}})
\end{eqnarray*}
thus there exists a constant $C>0$ which does not depend on $h$ such
that (as $h\to +\infty$ we can take $h>1$):
\begin{eqnarray*}\label{6.q}
\|u_{g_{op_{h}}h} -u_{g_{op}}\|_{V}\leq C
\quad \mbox{  and   } \quad    (h-1)\int_{\Gamma_{1}} |u_{g_{op_{h}}h}- b|^{2}ds \leq C,
\end{eqnarray*}
then
\begin{eqnarray}\label{6.6}
u_{g_{op_{h}}h} \tow \xi \quad in \quad V \quad weak \quad \mbox{(in
H strong),}
\end{eqnarray}
\begin{eqnarray}\label{6.7}
 u_{g_{op_{h}}h} \to  b\quad in \quad L^{2}(\Gamma_{1}) \quad strong,
\end{eqnarray}
and then $\xi\in K$.

Now taking $v\in K$  in (\ref{iv2}) where $u= u_{g_{op_{h}}h}$ and $g=g_{op_{h}}$ so
 \begin{eqnarray*}\label{}
  a_{h}(u_{g_{op_{h}}h} , v- u_{g_{op_{h}}h}) \geq ( g_{op_{h}} , v- u_{g_{op_{h}}h}) - \int_{\Gamma_{2}}q(v- u_{g_{op_{h}}h})ds
+ h\int_{\Gamma_{1}} b(v- u_{g_{op_{h}}h})ds
 \end{eqnarray*}
as $v\in K$ so $v=b$ on $\Gamma_{1}$, thus we obtain

 \begin{eqnarray*}\label{}
  a(u_{g_{op_{h}}h} , u_{g_{op_{h}}h})
+ h \int_{\Gamma_{1}}(u_{g_{op_{h}}h}-b)^{2}ds
\leq a( u_{g_{op_{h}}h} ,  v) -
( g_{op_{h}}  ,  v- u_{g_{op_{h}}h})
 \nonumber\\+ \int_{\Gamma_{2}}q(v- u_{g_{op_{h}}h})ds.
 \end{eqnarray*}
Thus
\begin{eqnarray*}\label{}
  a(u_{g_{op_{h}}h} , u_{g_{op_{h}}h})
\leq a( u_{g_{op_{h}}h} ,  v) -
( g_{op_{h}}  ,  v- u_{g_{op_{h}}h})
 + \int_{\Gamma_{2}}q(v- u_{g_{op_{h}}h})ds,
 \end{eqnarray*}
using (\ref{6.5}) and (\ref{6.6}) we deduce that
\begin{eqnarray*}\label{}
  a(\xi , v- \xi) \geq  (f ,  v -\xi)
 - \int_{\Gamma_{2}}q(v -\xi)ds, \quad \forall v\in K,
 \end{eqnarray*}
so by the uniqueness of the solution of the variational inequality
(\ref{iv1}) we obtain that
\begin{eqnarray}\label{xi}
u_{f}= \xi.
 \end{eqnarray}
Now we prove that  $f= g_{op}$.  Indeed we have
 \begin{eqnarray*}\label{}
  J(f)&=&{1\over 2} \|\xi\|_{H}^{2} + {M\over 2} \|f\|_{H}^{2}
\nonumber\\
&\leq& \liminf_{h\to +\infty} \left\{{1\over 2}
\|u_{g_{op_{h}}h}\|_{H}^{2} + {M\over 2} \|g_{op_{h}}\|_{H}^{2}
\right\} =\liminf_{h\to +\infty} J_{h}(g_{op_{h}})
\nonumber\\
&\leq& \liminf_{h\to +\infty} J_{h}(g) =\liminf_{h\to +\infty}
\left\{{1\over 2} \|u_{g_{h}}\|_{H}^{2} + {M\over 2} \|g\|_{H}^{2}
\right\}
 \end{eqnarray*}
using now the strong convergence $u_{g_{h}}\to u_{g}$ as $h\to
+\infty,\; \forall \; g\in H$ (see Lemma \ref{l6.1}), we obtain that
 \begin{eqnarray}\label{6.9}
J(f)\leq \liminf_{h\to +\infty} J_{h}(g_{op_{h}}) \leq
 {1\over 2} \|u_{g}\|_{H}^{2} + {M\over 2} \|g\|_{H}^{2}= J(g),
\qquad \forall g\in H
 \end{eqnarray}
then by the uniqueness of the optimal control problem (\ref{P}) we get
 \begin{eqnarray}\label{f}
f= g_{op}.
 \end{eqnarray}

Now we prove the  strong convergence of  $u_{g_{op_{h}}h}$ to $\xi$ in $V$, indeed
taking $v=\xi$ in (\ref{iv2}) where $u=u_{g_{op_{h}}h}$ and $g= g_{op_{h}}$ we get
 \begin{eqnarray*}
 a_{h}( u_{g_{op_{h}}h} , \xi -u_{g_{op_{h}}h}) \geq (g_{op_{h}} , \xi -u_{g_{op_{h}}h}) - \int_{\Gamma_{2}} q (\xi -u_{g_{op_{h}}h}) ds
+ h \int_{\Gamma_{1}} b(\xi -u_{g_{op_{h}}h})ds,
 \end{eqnarray*}
as $\xi\in K$ so $\xi=b$ on $\Gamma_{1}$, we obtain
 \begin{eqnarray*}\label{}
 a_{1}( u_{g_{op_{h}}h} -\xi , u_{g_{op_{h}}h}-\xi)
+ (h-1)  \int_{\Gamma_{1}}( u_{g_{op_{h}}h} -\xi)^{2}ds
\leq (g_{op_{h}} , u_{g_{op_{h}}h}-\xi)
\nonumber\\
 + \int_{\Gamma_{2}} q (\xi -u_{g_{op_{h}}h}) ds
+ a(\xi , \xi - u_{g_{op_{h}}h})
 \end{eqnarray*}
thus
\begin{eqnarray*}
\lambda_{1}\|u_{g_{op_{h}}h} -\xi\|_{V}^{2}
\leq (g_{op_{h}} , u_{g_{op_{h}}h}-\xi)
 + \int_{\Gamma_{2}} q (\xi -u_{g_{op_{h}}h}) ds
+ a(\xi , \xi - u_{g_{op_{h}}h}).
 \end{eqnarray*}
Using (\ref{6.6}) we deduce that
\begin{eqnarray*}\label{}
\lim_{h\to +\infty}\|u_{g_{op_{h}}h} -\xi\|_{V} = 0,
 \end{eqnarray*}
and with (\ref{xi}) we deduce (\ref{6.1}).
Moreover, as $f\in H$, then from  (\ref{6.9}) with $g=f$ and (\ref{f}) we can write
\begin{eqnarray}\label{eq6.11}
 J(f)&=& J(g_{op}) ={1\over 2}\|u_{g_{op}}\|_{H}^{2} + {M\over 2}\|g_{op}\|_{H}^{2}
\nonumber\\
&=& \lim_{h\to+\infty} J_{h}(g_{op_{h}}) = \lim_{h\to+\infty}
\left\{{1\over 2}\|u_{g_{op_{h}}h}\|_{H}^{2} + {M\over
2}\|g_{{op_{h}}}\|_{H}^{2}\right\}
\end{eqnarray}
and using (\ref{6.1}) the strong convergence $u_{g_{op_{h}}h}\to \xi=u_{f}=u_{g_{op}}$ in $V$, we get
\begin{eqnarray}\label{eq6.12}
\lim_{h\to+\infty}\|u_{g_{op_{h}}h}\|_{H} = \|u_{g_{op}}\|_{H},
\end{eqnarray}
thus from (\ref{eq6.11}) and  (\ref{eq6.12}) we get
\begin{eqnarray}\label{eq6.13}
\lim_{h\to+\infty}\|g_{{op_{h}}}\|_{H}= \|g_{op}\|_{H}.
\end{eqnarray}
Finally
\begin{eqnarray}\label{eq6.14}
\lim_{h\to +\infty}\|g_{{op_{h}}}- g_{op}\|_{H}^{2} = \lim_{h\to
+\infty}\left( \|g_{{op_{h}}}\|_{H}^{2}+ \|g_{op}\|_{H}^{2}
-2(g_{{op_{h}}} , g_{op})\right).
\end{eqnarray}
By  the first part of (\ref{6.5}) we obtain that
$$\lim_{h\to +\infty}\left(g_{{op_{h}}} , g_{op}\right) = \|g_{op}\|_{H}^{2},$$
so from (\ref{eq6.13}) and (\ref{eq6.14}) we  get (\ref{6.2}). This ends  the proof.
\end{proof}

\begin{remark}\label{rf}
Much of the recent literature on optimal control problems governed
by variational inequalities (often called mathematical programs with
equilibrium constraints (MPEC)) is focused on the numerical
realization of stationary points to these problems. See for example
recent works as e.g. {\rm\cite{H2009}} and their references within
it. The numerical analysis of the convergence of optimal control
problems governed by elliptic variational equalities {\rm\cite{GT}}
is given in {\rm\cite{T2011}} but the numerical analysis of the
corresponding convergence of optimal control problems governed by
elliptic variational inequalities given by {\rm Theorem \ref{th6.1}}
is an open problem.
\end{remark}

\bigskip

\noindent{\bf Conclusions:} In this paper we have first established
the error estimate between the convex combination
$u_{3}(\mu)= \mu u_{g_{1}}+ (1-\mu)u_{g_{2}}$
 of two solutions
$u_{g_{1}}$ and $u_{g_{2}}$ for elliptic variational inequality
corresponding to the data $g_{1}$ and $g_{2}$ respectively, and the
solution $u_{4}(\mu)= u_{g_{3}(\mu)}$  of the same elliptic
variational inequality corresponding to the convex combination
$g_{3}(\mu)= \mu g_{1} + (1-\mu)g_{2}$ of the two data. This result
complements and generalizes the previous one given in \cite{MB-DT1}.

Using the existence and uniqueness of the solution to particular
elliptic variational inequality, we consider a family of distributed
optimal control problems on the internal energy $g$ associated to
the heat transfer coefficient $h$ defined on a portion of the
boundary of the domain.  Using the monotony property \cite{Mingot1}
(see (\ref{eq3.4}) and (\ref{eq3.4h}) ) we can obtain the strict
convexity of the cost functional (\ref{e4.1}) and (\ref{Jh}), and
the existence and uniqueness of the distributed optimal control
problems (\ref{P}) and (\ref{Ph}) for any $h>0$ holds by a different
way used in \cite{Mingot1} avoiding the conical differentiability of
the cost functional. Then  we prove that the optimal control
$g_{op_{h}}$ and its corresponding state of the system
$u_{g_{op_{h}h}}$ are strongly convergent, when $h\to +\infty$, to
$g_{op}$ and $u_{g_{op}}$ which are respectively the optimal control
and its corresponding state of the system, for a limit Dirichlet
distributed optimal control problems. We obtain our results without
using the notion of adjoint state (i.e. the Mignot's conical
differentiability) of the optimal control problems which is a very
important advantage with respect to the previous result given in
\cite{GT} for elliptic variational equalities.

\bigskip
\noindent{\bf Acknowledgements:} This work was realized while the
second author was a visitor at Saint Etienne University (France) and
he is grateful to this institution for its hospitality, and it was partially supported by Grant FA9550-1061-0023. We would
like to thank two anonymous referees for their constructive comments
which improved the readability of the manuscript.



\begin{thebibliography}{00}
\bibitem{A2006} K. Ait Hadi (2006)
 \emph{Optimal control of the obstacle problem: optimality conditions},
   IMA J. Math. Control Inform.  23, 325-334.

\bibitem{bacuta2003} C. Bacuta, J. H. Bramble and J.E. Pasciak (2003)
\emph{Using finite element tools in proving shift theorems for elliptic boundary value problems},
Numer. Linear Algebra Appl. 10, 33-64.

\bibitem{Barbu84a} V. Barbu  (1984)
\emph{Optimal control of variational inequalities}. Research Notes
in Mathematics, 100. Pitman (Advanced Publishing Program), Boston,
MA.

\bibitem{B1997b} M. Bergounioux (1997)
 \emph{Use of augmented Lagrangian methods for the
optimal control of obstacle problems},
   J. Optim. Th. Appl.  95 (1), 101-126.

\bibitem{Mber2} M. Bergounioux, and K. Kunisch (1997)
\emph{Augmented Lagrangian Techniques for elliptic state constrained
optimal Control of problems}, SIAM J. Control Optim. 35, No.5,
1524-1543.

\bibitem{BM2000} M. Bergounioux and F. Mignot (2000)
 \emph{Optimal control of obstacle problems: existence of Lagrange multipliers},
   ESAIM: Control, Optim. and Calculus of Variations  5, 45-70.

\bibitem{MB-DT1} M. Boukrouche and D. A. Tarzia (2007)
 \emph{On a convex combination of solutions to elliptic variational
inequalities}, Electro. J. Diff. Equations 2007, No. 31, pp. 1-10

\bibitem{Acapatina2000} A. Capatina (2000)
 \emph{Optimal Control of a Signorini contact problem}, Numer. Funct. Anal. and Optimiz. 21(7-8), 817-828

\bibitem{CaJa1959} H.S. Carslaw and  J.C. Jaeger (1959)
\emph{Conduction of heat in solids}. Clarendon Press, Oxford.

\bibitem{GT} C.M. Gariboldi and D.A. Tarzia (2003)
\emph{Convergence of distributed
optimal controls on the internal energy in mixed
  elliptic problems when the heat transfer coefficient goes to infinity},
   Appl. Math. Optim.  47 (3),  213-230.

\bibitem{gri85} P. Grisvard (1985)
\emph{Elliptic problems in non-smooth domains}, Pitman, London.

\bibitem{HMRS2009} R. Haller-Dintelmann, C. Meyer, J. Rehberg and A. Schiela (2009)
\emph{Holder continuity and optimal control for nonsmooth elliptic
problems}, Appl. Math. Optim. 60, 397-428.

\bibitem{Jhas1986} J. Haslinger and T. Roubicek  (1987)
\emph{Optimal control of  variational inequalities. Approximation
Theory and Numerical Realization}, Appl. Math. Optim. 14,  187-201.

\bibitem{H2001} M. Hinterm\"uller (2001)
\emph{Inverse coefficient problems for variational inequalities:
Optimality and numerical realization}, Math. Modelling Numer. Anal.
35, 129-152.

\bibitem{H2008} M. Hinterm\"uller (2008)
\emph{An active-set equality constrained Newton solver with
feasibility restoration for inverse coefficient problems in elliptic
variational inequalities}, Inverse Problems 24 (Article 034017),
1-23.

\bibitem{H2009} M. Hinterm\"uller and I. Kopacka (2009)
\emph{Mathematical programs with complementary constraints in
fucntion space: C- and strong stationarity and a path-following
algoritm}, SIAM J. Optim. 20, 868-902.

\bibitem{IK2000} K. Ito and K. Kunisch (2000)
\emph{Optimal control of elliptic variational inequalities},
   Appl. Math. Optim.  41, 343-364.

\bibitem{IK2008} K. Ito and K. Kunisch (2008)
\emph{Lagrange multiplier approach to variational problems and
applications}, SIAM, Philadelphia.

\bibitem{KeMu2008}
S. Kesavan and T. Muthukumar (2008) \emph{Low-cost control problems
on perforated and non-perforated domains},
 Proc. Indian Acad. Sci. (Math. Sci.) 118, No. 1, 133-157.

\bibitem{KeSa1997}
S. Kesavan and J. Saint Jean Paulin (1997) \emph{Homogenization of
an optimal control problem}, SIAM J. Control Optim. 35, No.5,
1557-1573.

\bibitem{KeSa1999}
S. Kesavan and J. Saint Jean Paulin (1997) \emph{Optimal control on
perforated domains}, J. Math. Anal. Appl. 229, 563-586.

\bibitem{K2008}
S. D. Kim (2008) \emph{Uzawa algorithms for coupled Stokes equations
from the optimal control problem}, Calcolo 46, 37-47.

\bibitem{Kind80}
D. Kinderlehrer and G. Stampacchia (1980)
\emph{An introduction to variational inequalities and their applications}.
Academic Press, New York.

\bibitem{LCB2008}
L. Lanzani, L. Capogna and R.M. Brown (2008) \emph{The mixed problem
in $L^{p}$ for some two-dimensional Lipschitz domains},
 Math. Annalen 342, 91-124.

\bibitem{JLL} J.L. Lions (1968)
\emph{Contr\^ole optimal de syst\`emes gouvern\'es par des
\'equations aux d\'eriv\'ees partielles}, Dunod, Paris.

\bibitem{JLL1}
J. L. Lions and G. Stampacchia (1967)
\emph{Variational inequalities},
 Comm. Pure Appl. Math. 20, 493-519.

\bibitem{MT} J.L. Menaldi and D. A. Tarzia (2007)
 \emph{A distributed parabolic control with mixed boundary
 conditions}, Asymptotic Anal. 52, 227-241.

\bibitem{MRT2006} C. Meyer, A. R\"osch and F. Tr\"oltzsch (2006)
\emph{Optimal control pf PDEs with regularized pointwise state
constraints}, Comput. Optim. Appl. 33, 209-228.

\bibitem{Mingot1} F. Mignot (1976)
\emph{Contr\^ole dans les in\'equations variationelles elliptiques},
 J. Functional Anal.  22, No. 2, 130-185.

\bibitem{MP1984} F. Mignot and J.P. Puel (1984)
 \emph{Optimal control in some variational inequalities},
   SIAM J. Control Optim.  22 (3), 466-476.


\bibitem{NPS2006} P. Neittaanm\"aki, J. Sprekels and  D. Tiba (2006)
\emph{Optimization of elliptic systems. Theory and applications},
Springer Monographs in Mathematics, Springer, New York.



\bibitem{Pa1977} F. Patrone (1977)
 \emph{On the optimal control of variational inequalities},
   J. Optim. Th. Appl.  22 (3), 373-388.

\bibitem{R1987} J.F. Rodrigues (1987)
\emph{Obstacle problems in mathematical physics}, North-Holland,
Amsterdam.

\bibitem{Stamp64}
 G. Stampacchia (1964)
\emph{Formes bilin\'eaires coercitives sur les ensembles convexes,}
 C. R. Acad. Sci. Paris.  258,   4413-4416.

\bibitem{TT} E.D. Tabacman and D. A. Tarzia (1989)
\emph{Sufficient and/or necessary condition for the heat transfer
coefficient on $\Gamma_{1}$ and the heat flux on $\Gamma_{2}$ to
obtain a steady-state two-phase Stefan problem},
 J. Diff. Equations 77, No.1, 16-37.

\bibitem{T} D. A. Tarzia (1979)
\emph{Una familia de .problemas que converge hacia el caso
estacionario del problema de Stefan a dos fases},
 Math. Notae 27, 157-165.

\bibitem{T2011} D. A. Tarzia (2011)
\emph{Convergence of a family of distributed discrete elliptic
optimal control problems with respect to a parameter}, in ICIAM
2011, Vancouver, July 18-22, 2011.

\bibitem{T2010} F. Tr\"oltzsch (2010)
\emph{Optimal control of partial differential equations: Theory,
methods and applications}, American Math. Soc., Providence.

\bibitem{YC2004} Y. Ye and Q. Chen (2004)
 \emph{Optimal control of the obstacle problem in a quasilinear elliptic variational inequality},
   J. Math. Anal. Appl.  294,  258-272.

\end{thebibliography}
\end{document}